\newtheorem{theorem}{Theorem}[section]
\newtheorem{lemma}{Lemma}[section]
\newtheorem{corollary}[theorem]{Corollary}
\providecommand{\customgenericname}{}
\newcommand{\newcustomproblem}[2]{%
	\newenvironment{#1}[1]
	{%
		\renewcommand\customgenericname{#2}%
		\renewcommand\theinnercustomgeneric{##1}%
		\innercustomgeneric
	}
	{\endinnercustomgeneric}
}
\newcommand*{\bqed}{\hfill\ensuremath{\blacksquare}}%
\newcommand{\wsc}{\overset{\ast}{\rightharpoonup}}%
\def\dd{\, \mathrm{d}}
\newcommand{\Desc}[2]{\State \makebox[2em][l]{#1}#2}
\begin{document}
	
	
	\title[Icosahedral viral capsids modelling]{A three-dimensional discrete model for approximating the deformation of a viral capsid subjected to lying over a flat surface in the static and time-dependent case}
	

	\author[Piersanti]{Paolo Piersanti}
	\address{Department of Mathematics and Institute for Scientific Computing and Applied Mathematics, Indiana University Bloomington, 729 East Third Street, Bloomington, Indiana, USA}
	\email{ppiersan@iu.edu}

	\author[White]{Kristen White}
	\address{Department of Chemistry, Indiana University Bloomington, 800 East Kirkwood Avenue, Bloomington, Indiana 47405, USA}
	\email{kw98@iu.edu}
	
	\author[Dragnea]{Bogdan Dragnea}
	\address{Department of Chemistry, Indiana University Bloomington, 800 East Kirkwood Avenue, Bloomington, Indiana 47405, USA}
	\email{dragnea@indiana.edu}
		
	\author[Temam]{Roger Temam}
	\address{Department of Mathematics and Institute for Scientific Computing and Applied Mathematics, Indiana University Bloomington, 729 East Third Street, Bloomington, Indiana, USA}
	\email[Corresponding author]{temam@indiana.edu}

\begin{abstract}
In this paper we present a three-dimensional discrete model governing the deformation of a viral capsid, modelled as a regular icosahedron and subjected not to cross a given flat rigid surface on which it initially lies in correspondence of one vertex only. First, we set up the model in the form of a set of variational inequalities posed over a non-empty, closed and convex subset of a suitable space. Secondly, we show the existence and uniqueness of the solution for the proposed model. Thirdly, we numerically test this model and we observe that the outputs of the numerical experiments comply with physics. Finally, we establish the existence of solutions for the corresponding time-dependent obstacle problem.
\end{abstract}

\maketitle

\section{Introduction}

Canonical virus architecture often involves a symmetric polyhedral cage surrounding the viral genome. This nanoscopic cage, called the capsid, takes multiple roles during the virus life cycle: genome protection, targeting of a host cell, genome presentation. Switching between these roles is triggered by reading the chemical environment and usually manifests as a change in the mechanical properties of the cage \cite{Zandi2020}. The latter can be probed by atomic force microscopy (AFM)  -- an \emph{in situ} method able to measure the stress-strain relationship at the scale of a single virus, as a function of the chemical environment. In AFM imaging, a sharp mechanical probe compresses the capsid uniaxially against a flat solid support. As the virus deforms, the contact area and the magnitude of adhesive forces increase. The final shape of the cage is the result of the balance between adhesive and elastic forces \cite{Zeng2017a}. Here we present a purely elastic icosahedral cage deformation model including irreversible adhesion, for studying the shape of polyhedral cages that have undergone a two step process which mimics force application in AFM imaging. 

This paper is divided into five sections, including this one. In section~\ref{Sec:1} we present the main notation ans well as the geometrical and analytical background.
In section~\ref{Sec:2} we derive a discrete linearized static (i.e., time-independent) model governing the deformation of an icosahedral viral cage under the action of forces like those described in the above paragraph, and subjected to a \emph{confinement condition} according to which the points of the deformed reference configuration do not have to cross a prescribed plane on which the reference configuration of the icosahedral cage lies in correspondence of one point only when no forces are acting on it. This problem can be classified as an \emph{obstacle problem}, and it is thus possible to observe that it takes the form of a set of variational inequalities posed over a non-empty, closed and convex subset of an \emph{ad hoc} Euclidean (finite-dimensional) space.
We then establish the existence and uniqueness of solutions for this model. In section~\ref{Sec:3} we conduct numerical experiments on the model we recovered in section~\ref{Sec:2}. In the first series of experiments we compute the deformed reference configuration of the icosahedral viral cage when it is subjected to the action of applied body forces and -- at the same time -- it has to obey the confinement condition introduced beforehand. In the second series of experiments, we compute the equilibrium shape of the viral cage in the case of \emph{irreversible adhesion}, in the sense that the points which are in contact with the obstacle (the prescribed plane) at the end of the first series of experiments, must continue to remain in contact with the obstacle when the second series of experiments is carried out.
Finally, in section~\ref{Sec:4}, we study the time-dependent version of the model introduced in section~\ref{Sec:2}. Unlike the elliptic counterpart, the concept of solution for this time-dependent problem -- which will be of hyperbolic type since the displacement is the main unknown entering the model under consideration -- is \emph{a priori} not clear. Therefore, we will recover the governing hyperbolic contact problem and we will establish the existence of solutions using a technique based on the penalty method. It is noticeable that the concept of solution for this newly recovered model is not standard, as it involves the usage of vector-valued measures.

\section{Geometrical and analytical preliminaries}
\label{Sec:1}

A point $A$ in the plane corresponds to a column vector in $\mathbb{R}^3$ of the form $A=\begin{pmatrix}x_A,y_A,z_A\end{pmatrix}^T$. Here, the symbol $T$ denotes the transposition operator.
Let us consider a Cartesian frame for the three-dimensional plane with origin $O=\begin{pmatrix}0,0,0\end{pmatrix}^T$ and with canonical directions $\vec{e}_1=\begin{pmatrix}1,0,0\end{pmatrix}^T$, $\vec{e}_2=\begin{pmatrix}0,1,0\end{pmatrix}^T$ and $\vec{e}_3=\begin{pmatrix}0,0,1\end{pmatrix}^T$.

The position vector associated with the point $A$ is denoted by $\overrightarrow{OA}$; the angle between three points $A$, $B$ and $C$ with vertex at $B$ is either denoted by $\widehat{ABC}$ or by a Greek letter.
The Euclidean inner product and the vector product between two vectors $\overrightarrow{OA}$ and $\overrightarrow{OB}$ are respectively denoted by $\overrightarrow{OA} \cdot \overrightarrow{OB}=\overrightarrow{OA}^T \overrightarrow{OB}$ and $\overrightarrow{OA} \times \overrightarrow{OB}$. The Euclidean norm of $\overrightarrow{OA}$ is denoted $\left|\overrightarrow{OA}\right|$. Matrices are denoted by capital Greek letters.
Tensors are denoted by boldface capital Latin letters.

Given an open interval $I$ of $\mathbb{R}$, notations such as $L^p(I)$, $W^{m,p}(I)$, $m,p \ge 1$, designate the usual Lebesgue and Sobolev spaces, with norms $\|\cdot\|_{L^p(0,T)}$ and $\|\cdot\|_{W^{m,p}(0,T)}$, respectively.
The space of continuous functions on $\overline{I}$ is denoted by $\mathcal{C}^0(\overline{I})$.
The spaces $L^p(I;\mathbb{R}^n)$ and $W^{m,p}(I;\mathbb{R}^n)$ are spaces of vector-valued functions $\bm{v}:I\to\mathbb{R}^n$ such that each component $v_i$, $1 \le i \le n$, is in $L^p(I)$ or $W^{m,p}I(I)$, respectively.
The space $\mathcal{C}^0(\overline{I};\mathbb{R}^n)$ is the space of vector-valued functions $\bm{v}:I\to\mathbb{R}^n$ such that each component $v_i$, $1 \le i \le n$, is in $\mathcal{C}^0(\overline{I})$. The space $\mathcal{D}(I)$ denotes the space of infinitely differentiable functions with compact support in $I$.

The positive and negative parts of a function $f:I \to \mathbb{R}$ are respectively denoted by:
$$
f^{+}(x):=\max\{f(x),0\}\quad\textup{ and }\quad f^{-}(x):=-\min\{f(x),0\} \quad x \in I.
$$

In this paper we model the deformation of a regular icosahedron whose vertices (and so the edges) are subjected not to cross an \emph{undeformable} flat surface. The problem amounts to minimizing an \emph{ad hoc} energy functional defined over a non-empty, closed, convex subset of a finite-dimensional space or, equivalently, to solving a set of variational inequalities posed on the aforementioned non-empty, closed, convex set.

We assume that one and only one vertex of the undeformed reference configuration is initially in contact with the flat surface. 
We also assume that each edge is \emph{massless} and can \emph{only stretch or compress}, hence, that \emph{there is no torsion acting on them}.
We further \emph{assume} that one such contact point, denoted by $P_0$ in what follows, undergoes no displacement; this assumption is critical to establish the existence and uniqueness of the solution of the governing equations.

\begin{figure}
	\includegraphics[width=0.45\linewidth]{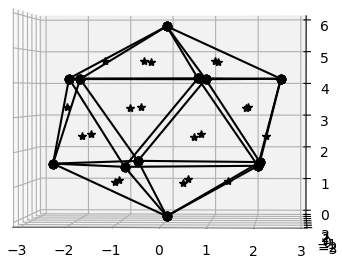}
	\caption{The reference configuration of the problem under consideration is a regular icosahedron. The points shaped like asterisks denote the barycentres of each triangular face of the icosahedron.}
	\label{fig:0}
\end{figure}

\section{Formulation and well-posedness of the corresponding three-dimensional discrete model}
\label{Sec:2}

When a vertex $P$ of the regular icosahedron under consideration undergoes the action of an applied body force, it is mapped onto a new point in the space, denoted by $P'$. 

Let $\bm{F}=(\vec{f}_i)_{i=1}^{11} \in \mathbb{R}^{33}$ denote the array of applied body forces acting on the regular icosahedron vertices. The application of the force vector $\vec{f}_i \in \mathbb{R}^3$ on the point $P_i$ displaces the position vector $\overrightarrow{OP_i}$ by a vector $\vec{u}_i$, and transforms the  vector $\overrightarrow{OP_i}$ into the  vector $\overrightarrow{OP_i'}$ via the following relation:
\begin{equation*}
	\overrightarrow{OP_i'} = \overrightarrow{OP_i} + \vec{u}_i,\quad\textup{ for each } 1 \le i \le 11.
\end{equation*}

We denote by $\ell$ the length of any edge of the undeformed reference configuration of the regular icosahedron.

Since the point $P_0$ undergoes, by assumption, no deformation we let $\vec{u}_0=(0,0,0)^T$.
For each $1 \le i \le 11$, define the set
$$
\mathscr{N}(i):=\{j \neq i; |\overrightarrow{P_iP_j}|=\ell\}.
$$

The total elastic energy of the mechanical system under consideration is contributed by three components: the stretching (or compression) of each edge, the variation of the amplitude of each dihedral angle and, finally, the variation of the Gaussian curvature at each vertex.
We recall that the Gaussian curvature of a polyhedron is computed by means of a formula originally discovered by Descartes, and known in the literature as \emph{the Descartes Lost Theorem}, also known as \emph{The Second Euler Theorem}, that was later generalized by Gauss and Bonnet to a more general context (cf., e.g., Theorem 6.1.7 of~\cite{AbateTovena2012}). 
Let us recall that, for a general polyhedron, the \emph{angular defect at a vertex} is defined as the difference between $2\pi$ and the sum of all the face-angles at the vertex. 
We recall that, if a polyhedron is convex, then the angular defect at each vertex is always positive.

We now recall the celebrated formula proved by Descartes (cf., e.g., page~145 of~\cite{Prasolov2001}).

\begin{theorem}[The Lost Descartes Theorem]
	\label{Descartes}
	Let $P$ be the vertex of a convex three-dimensional polyhedron. Let $D(P)$ denote the angular defect at the vertex $P$.
	Then, the curvature at the vertex $P$ is given by:
	$$
	K(P)=D(P).
	$$
	
	Moreover, if the polyhedron is convex, its Gaussian curvature, which is given by the sum of the curvatures at each of the polyhedron vertex, is always equal to $4\pi$.
	\qed
\end{theorem}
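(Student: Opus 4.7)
The plan is to split the argument into two independent parts matching the two claims of the theorem. For the pointwise identity $K(P)=D(P)$, the content to be proved is that the curvature concentrated at the isolated singular point $P$ of the polyhedral surface equals the angular defect. I would isolate a small neighborhood $U_P$ of $P$ on the polyhedron; as a piecewise-flat surface it is intrinsically isometric to a cone whose total cone angle equals $S(P):=\sum_k \alpha_k$, the sum of the face-angles at $P$. Cutting this cone along one generator and unfolding it into the plane produces a circular sector of angular opening exactly $S(P)$; a flat disk would have opening $2\pi$, so the missing amount $2\pi-S(P)=D(P)$ captures the concentrated curvature. To make this rigorous I would smooth the vertex by replacing a small conical neighborhood with a convex spherical-like cap and apply the classical Gauss--Bonnet theorem to the smoothed piece, obtaining $\int_{U_P} K\,dA = D(P)$ independently of the smoothing; passing to the limit and using that the curvature of a polyhedron is supported on its vertices yields $K(P)=D(P)$.

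For the global statement I would rely on the combinatorial Gauss--Bonnet identity together with Euler's formula. Let $V$, $E$, $F$ denote the number of vertices, edges and faces. Since the polyhedron is convex it is homeomorphic to $S^2$, hence $V-E+F=2$. Summing the first part over all vertices gives
\begin{align*}
\sum_{P} K(P) = \sum_{P}\bigl(2\pi - S(P)\bigr) = 2\pi V - \sum_{P} S(P).
\end{align*}
The double sum is then reorganized face-by-face: a face $f$ with $n_f$ sides has interior angles summing to $(n_f-2)\pi$, so
\begin{align*}
\sum_{P} S(P) = \sum_{f}(n_f-2)\pi = \pi\sum_{f} n_f - 2\pi F = 2\pi E - 2\pi F,
\end{align*}
using that each edge lies on exactly two faces, whence $\sum_f n_f = 2E$. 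Combining the two displays yields $\sum_{P} K(P)=2\pi(V-E+F)=4\pi$, which is the total Gaussian curvature claim.

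The genuine obstacle is the first part: justifying that the angular defect, a purely metric-combinatorial quantity, coincides with a differential-geometric object like the Gaussian curvature. The smoothing/Gauss--Bonnet argument sketched above is the cleanest path, but one must be careful that the value $\int_{U_P} K\,dA$ is independent of the particular smoothing chosen and depends only on the intrinsic cone angle at $P$; this independence is itself a consequence of Gauss--Bonnet applied to the annular region between two different smoothings, where the boundary geodesic-curvature contributions cancel. Once this identification is firmly in place, the global $4\pi$ identity follows as a one-line corollary of Euler's formula, as shown above.
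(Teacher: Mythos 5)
The paper does not actually prove this statement: it is recalled as a classical result with a citation to Prasolov, and the \qed marks the omission of a proof. Judged on its own, your argument is correct, and your treatment of the global claim is exactly the standard Descartes--Euler computation: $\sum_P D(P)=2\pi V-\sum_f (n_f-2)\pi=2\pi(V-E+F)=4\pi$ after using $\sum_f n_f=2E$ and Euler's formula for a sphere-like polyhedron. For the local claim $K(P)=D(P)$, note first that in the paper's usage the curvature at a vertex is essentially \emph{defined} as the concentrated (angular-defect) curvature, so the real content of the theorem is the $4\pi$ identity; your smoothing-plus-Gauss--Bonnet argument is a legitimate way to reconcile the defect with the smooth notion of curvature, and your independence-of-smoothing step is sound (on the flat cone the boundary term of a circle of intrinsic radius $r$ about the apex is the cone angle $S(P)$, so $\int K\,dA=2\pi-S(P)$ for every convex smoothing). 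The classical route in the cited literature is more elementary and avoids the smooth Gauss--Bonnet theorem altogether: one defines $K(P)$ as the area of the spherical image of $P$ (the outer normal cone intersected with the unit sphere), observes that this spherical polygon is the polar dual of the link of $P$, whose side lengths are the face angles $\alpha_k$, so its angles are $\pi-\alpha_k$, and the spherical area formula gives $\sum_k(\pi-\alpha_k)-(n-2)\pi=2\pi-\sum_k\alpha_k=D(P)$. Your approach buys a direct bridge to the differential-geometric curvature measure (useful if one wants to view the icosahedron as a limit of smooth surfaces), at the cost of invoking smoothing machinery; the polar-duality proof stays entirely within convex and spherical geometry and also makes the $4\pi$ total obvious, since the spherical images of the vertices tile the unit sphere.
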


The first component of the total elastic energy is the stretching energy associated with the displacement 
$$
\bm{U}=\begin{pmatrix}
	\vec{u}_1\\
	\vdots\\
	\vec{u}_{11}
\end{pmatrix} \in \mathbb{R}^{33} \textup{ with }
\vec{u}_i=
\begin{pmatrix}
u_i^1\\u_i^2\\u_i^3
\end{pmatrix} \textup{ for all }1 \le i \le 11.
$$ 

The stretching energy is computed via Hooke's law, i.e.,
\begin{equation*}
	J_s(\bm{U}):=\dfrac{k_s}{4} \sum_{i=0}^{11} \sum_{j \in \mathscr{N}(i)} |\vec{u}_i - \vec{u}_j|^2,
\end{equation*}
where the elastic constant $k_s>0$ is associated with the elongation properties of the constitutive material, and the nature of the energy is aptly recalled by the subscript ``$s$''. The convexity of $J_s$ is straightforward: Indeed, for all $0 \le \lambda \le 1$ and all $\bm{U}$ and $\bm{V}$ in $\mathbb{R}^{33}$, we have that
\begin{equation*}
\begin{aligned}
&J_s(\lambda \bm{U} + (1-\lambda) \bm{V})=\dfrac{k_s}{4} \sum_{i=0}^{11} \sum_{j \in \mathscr{N}(i)} |\lambda(\vec{u}_i - \vec{u}_j) + (1-\lambda)(\vec{v}_i - \vec{v}_j)|^2\\
&\le \lambda\dfrac{k_s}{4} \sum_{i=0}^{11} \sum_{j \in \mathscr{N}(i)} |\vec{u}_i - \vec{u}_j|^2 + (1-\lambda)\dfrac{k_s}{4} \sum_{i=0}^{11} \sum_{j \in \mathscr{N}(i)} |\vec{v}_i - \vec{v}_j|^2\\
&=\lambda J_s(\bm{U}) +(1-\lambda) J_s(\bm{V}).
\end{aligned}
\end{equation*}

Observe that the functional $J_s$ is differentiable in the Fr\'echet sense at any $\bm{U}$, since
\begin{equation*}
\begin{aligned}
&J_s(\bm{U}+\bm{V})-J_s(\bm{U})=\dfrac{k_s}{4}\sum_{i=0}^{11} \sum_{j \in \mathscr{N}(i)} \left\{\left|(\vec{u}_i-\vec{u}_j)+(\vec{v}_i-\vec{v}_j)\right|^2-|\vec{u}_i-\vec{u}_j|^2\right\}\\
&=\dfrac{k_s}{2}\sum_{i=0}^{11} \sum_{j \in \mathscr{N}(i)} (\vec{u}_i-\vec{u}_j) \cdot (\vec{v}_i-\vec{v}_j)
+\dfrac{k_s}{4}\sum_{i=0}^{11} \sum_{j \in \mathscr{N}(i)} |\vec{v}_i-\vec{v}_j|^2,
\end{aligned}
\end{equation*}
and, therefore, we have that the action of the Fr\'echet derivative of $J_s'(\bm{U})$ at any $\bm{V}$ is given by:
\begin{equation*}
J_s'(\bm{U})\bm{V}=\dfrac{k_s}{2}\sum_{i=0}^{11} \sum_{j \in \mathscr{N}(i)} (\vec{u}_i-\vec{u}_j) \cdot (\vec{v}_i-\vec{v}_j).
\end{equation*}

To prove the strict convexity, let us show that for all  $\bm{U}$ and $\bm{V}$ with $\bm{U} \neq \bm{V}$ it results:
\begin{equation}
\label{euler}
J_s(\bm{V}) > J_s(\bm{U})+J_s'(\bm{U})(\bm{V}-\bm{U}).
\end{equation}

Observe that a direct computation gives:
\begin{equation}
\label{euler1}
\begin{aligned}
&J_s(\bm{V}) - J_s(\bm{U})-J_s'(\bm{U})(\bm{V}-\bm{U})\\
&=\dfrac{k_s}{4} \sum_{i=0}^{11} \sum_{j \in \mathscr{N}(i)}\left\{|\vec{v}_i-\vec{v}_j|^2-|\vec{u}_i-\vec{u}_j|^2-2(\vec{u}_i-\vec{u}_j)\cdot(\vec{v}_i-\vec{v}_j)+2|\vec{u}_i-\vec{u}_j|^2\right\}\\
&=\dfrac{k_s}{4} \sum_{i=0}^{11} \sum_{j \in \mathscr{N}(i)}\left\{|\vec{v}_i-\vec{v}_j|^2+|\vec{u}_i-\vec{u}_j|^2-2(\vec{u}_i-\vec{u}_j)\cdot(\vec{v}_i-\vec{v}_j)\right\}\\
&=\dfrac{k_s}{4} \sum_{i=0}^{11} \sum_{j \in \mathscr{N}(i)}\left|(\vec{u}_i-\vec{u}_j)-(\vec{v}_i-\vec{v}_j)\right|^2 \ge 0.
\end{aligned}
\end{equation}

If for all $1 \le i \le 11$ and all $j \in \mathscr{N}(i)$ it resulted $(\vec{u}_i-\vec{u}_j)=(\vec{v}_i-\vec{v}_j)$ then, in particular, for all $1 \le i \le 11$ such that $0 \in \mathscr{N}(i)$ the assumption $\vec{u_0}=\vec{v}_0=\vec{0}$ would imply
\begin{equation}
\label{cond2}
\vec{u}_i=\vec{v}_i.
\end{equation}

Thanks to~\eqref{cond2}, we in turn derive that $\vec{u}_j=\vec{v}_j$ for all $j \in \mathscr{N}(i) \setminus \{0\}$. Therefore, given any $k \in \mathscr{N}(i) \setminus\{0\}$, we have that
\begin{equation}
\label{cond3}
\vec{u}_\ell =\vec{v}_\ell,\quad\textup{ for all }\ell \in \mathscr{N}(k).
\end{equation}

By repeatedly applying~\eqref{cond2} and~\eqref{cond3} we obtain that the left-hand side of~\eqref{euler1} is equal to zero if and only if $\bm{U}=\bm{V}$ (the ``if'' part is straightforward). Therefore, if $\bm{U} \neq \bm{V}$, the inequality~\eqref{euler} is verified and we have that $J_s$ is strictly convex, as it was to be proved.

%
%

The second component of the total elastic energy is associated with the variation of the dihedral angle $\alpha$ between any pair of adjacent faces.
From now on, we will refer to this kind of energy as \emph{the bending energy}.
The corresponding bending energy is given by:
\begin{equation*}
	\dfrac{k_b}{2} |\alpha-\alpha'|^2, \quad 1 \le m \le 20,
\end{equation*}
where the symbol $\alpha$ denotes the measure of any dihedral angle of the reference configuration, the symbol $\alpha'$ denotes the measure of the angle into which $\alpha$ is transformed after the application of an applied body force, and the symbol $k_b>0$ denotes the elastic constant associated with the bending properties of the constitutive material.

If the difference between $\alpha$ and $\alpha'$ is small, we can approximate $\alpha-\alpha'$ by $\sin(\alpha-\alpha')$. The latter term has the advantage that it can be expressed in terms of a vector product. In order to formulate the bending energy variation we resort to the outer unit normal vectors associated with each face of the reference configuration. More specifically, let the points $G^m$, $1 \le m \le 20$, denote the barycentres of the icosahedron faces.
Let $g$ denote the distance between the barycentres of two adjacent faces. Let $G^{m_1}$ and $G^{m_2}$ be the barycentres of any pair of adjacent faces. 

The integers $i=i(m_1,m_2)$ and $j=j(m_1,m_2)$ range between 0 and 11 and are associated with the reference configuration vertices in the following fashion:
$$
|\overrightarrow{G^{m_1}P_i}|=|\overrightarrow{G^{m_1}P_j}|=|\overrightarrow{G^{m_2}P_i}|=|\overrightarrow{G^{m_2}P_j}|.
$$

It is easy to see that the measure of the dihedral angle between two adjacent faces is equal to $\pi$ minus the measure of the dihedral angle between the outer unit normal vectors associated with two such faces.
The unit normal vector associated with the face of the reference configuration whose barycentre is the point $G^m$ is given by:
$$
\vec{q}^{\,\,m}:=\dfrac{\overrightarrow{G^m P_i} \times \overrightarrow{G^m P_j}}{|\overrightarrow{G^m P_i} \times \overrightarrow{G^m P_j}|}.
$$

When the reference configuration given by the regular icosahedron under consideration undergoes a deformation, the barycentre of any face of the reference configuration is deformed onto the barycentre of the image of that very face (which is again a triangle) as a result of the applied deformation. This makes sense from the geometrical and physical point of view, as the barycentre is not a material point on which the applied body  forces act.
For each $1 \le m \le 20$, define the set
$$
\mathscr{M}(m):=\{0 \le r \le 11; |\overrightarrow{G^mP_r}| \textup{ is minimal}\}.
$$

Any barycentre $G^m$ is thus transformed onto a point $G^{'m}$ via the following transformation:
$$
\overrightarrow{OG^{'m}}=\overrightarrow{OG^{m}}+\sum_{r \in \mathscr{M}(m)} \dfrac{\vec{u}_r}{3}.
$$

The unit normal vector associated with the face of the deformed reference configuration whose barycentre is the point $G^{'m}$ is given by:
$$
\vec{q}^{\,\,'m}:=\dfrac{\overrightarrow{G^{'m} P_i'} \times \overrightarrow{G^{'m} P_j'}}{|\overrightarrow{G^{'m} P_i'} \times \overrightarrow{G^{'m} P_j'}|}.
$$

The variation of the dihedral angle $\alpha$ between any pair of adjacent faces having for barycentres the points $G^{m_1}$ and $G^{m_2}$, with $1 \le m_1, m_2 \le 20$ is thus given by:

\begin{equation}
	\label{angle:1}
	\begin{aligned}
		&\alpha-\alpha' \approx \sin(\alpha -\alpha')=\sin \alpha \cos \alpha'-\cos\alpha \sin\alpha'\\
		&=|\vec{q}^{\,\,'m_1} \times \vec{q}^{\,\,'m_2}|(\vec{q}^{\,\,m_1} \cdot \vec{q}^{\,\,m_2})-(\vec{q}^{\,\,'m_1} \cdot \vec{q}^{\,\,'m_2}) |\vec{q}^{\,\,m_1} \times \vec{q}^{\,\,m_2}|\\
		&=\left|\dfrac{\overrightarrow{G^{'m_1} P_i'} \times \overrightarrow{G^{'m_1} P_j'}}{|\overrightarrow{G^{'m_1} P_i'} \times \overrightarrow{G^{'m_1} P_j'}|} \times \dfrac{\overrightarrow{G^{'m_2} P_j'} \times \overrightarrow{G^{'m_2} P_i'}}{|\overrightarrow{G^{'m_2} P_j'} \times \overrightarrow{G^{'m_2} P_i'}|}\right|
		\left(\dfrac{\overrightarrow{G^{m_1} P_i} \times \overrightarrow{G^{m_1} P_j}}{|\overrightarrow{G^{m_1} P_i} \times \overrightarrow{G^{m_1} P_j}|} \cdot \dfrac{\overrightarrow{G^{m_2} P_j} \times \overrightarrow{G^{m_2} P_i}}{|\overrightarrow{G^{m_2} P_j} \times \overrightarrow{G^{m_2} P_i}|}\right)\\
		&\quad -\left|\dfrac{\overrightarrow{G^{m_1} P_i} \times \overrightarrow{G^{m_1} P_j}}{|\overrightarrow{G^{m_1} P_i} \times \overrightarrow{G^{m_1} P_j}|} \times \dfrac{\overrightarrow{G^{m_2} P_j} \times \overrightarrow{G^{m_2} P_i}}{|\overrightarrow{G^{m_2} P_j} \times \overrightarrow{G^{m_2} P_i}|}\right|
		\left(\dfrac{\overrightarrow{G^{'m_1} P_i'} \times \overrightarrow{G^{'m_1} P_j'}}{|\overrightarrow{G^{'m_1} P_i'} \times \overrightarrow{G^{'m_1} P_j'}|} \cdot \dfrac{\overrightarrow{G^{'m_2} P_j'} \times \overrightarrow{G^{'m_2} P_i'}}{|\overrightarrow{G^{'m_2} P_j'} \times \overrightarrow{G^{'m_2} P_i'}|}\right)\\
		&\approx \dfrac{\left|(\overrightarrow{G^{m_1} P_i} \times \overrightarrow{G^{m_1} P_j}) \times (\overrightarrow{G^{m_2} P_j}\times \overrightarrow{G^{m_2} P_i})\right|}{|\overrightarrow{G^{m_2} P_j} \times \overrightarrow{G^{m_2} P_i}|^4} \Bigg[\left((\overrightarrow{G^{m_1} P_i} \times \overrightarrow{G^{m_1} P_j}) \cdot (\overrightarrow{G^{m_2} P_j} \times \overrightarrow{G^{m_2} P_i})\right)\\
		&\quad-\left((\overrightarrow{G^{'m_1} P_i'} \times \overrightarrow{G^{'m_1} P_j'}) \cdot (\overrightarrow{G^{'m_2} P_j'} \times \overrightarrow{G^{'m_2} P_i'}) \right) \Bigg].
	\end{aligned}
\end{equation}

Let us study the term
$$
(\overrightarrow{G^{'m_1} P_i'} \times \overrightarrow{G^{'m_1} P_j'}) \cdot (\overrightarrow{G^{'m_2} P_j'} \times \overrightarrow{G^{'m_2} P_i'}),
$$
in the previous set of equations~\eqref{angle:1}. We have:
\begin{align*}
	\!\!\!\!\!\!\!\!\!&(\overrightarrow{G^{'m_1} P_i'} \times \overrightarrow{G^{'m_1} P_j'}) \cdot (\overrightarrow{G^{'m_2} P_j'} \times \overrightarrow{G^{'m_2} P_i'})\\
	&=\left((\overrightarrow{O P_i'}-\overrightarrow{OG^{'m_1}}) \times (\overrightarrow{O P_j'}-\overrightarrow{O G^{'m_1}})\right) \cdot \left((\overrightarrow{O P_j'}-\overrightarrow{OG^{'m_2}})\times(\overrightarrow{O P_i'}-\overrightarrow{OG^{'m_2}})\right)\\
	&=\Bigg[\left(\overrightarrow{O P_i}+\vec{u}_i-\overrightarrow{OG^{m_1}}-\sum_{r \in \mathscr{M}(m_1)}\dfrac{\vec{u}_r}{3}\right)
	\times \left(\overrightarrow{O P_j}+\vec{u}_j-\overrightarrow{OG^{m_1}}-\sum_{r \in \mathscr{M}(m_1)}\dfrac{\vec{u}_r}{3}\right)\Bigg]\\
	&\quad \cdot\Bigg[\left(\overrightarrow{O P_j}+\vec{u}_j-\overrightarrow{OG^{m_2}}-\sum_{r \in \mathscr{M}(m_2)}\dfrac{\vec{u}_r}{3}\right)\times \left(\overrightarrow{O P_i}+\vec{u}_i-\overrightarrow{OG^{m_2}}-\sum_{r \in \mathscr{M}(m_2)}\dfrac{\vec{u}_r}{3}\right)\Bigg]\\
	&=\Bigg[\left(\overrightarrow{O P_i}-\overrightarrow{O G^{m_1}}+\dfrac{2}{3}\vec{u}_i-\sum_{\substack{r \in \mathscr{M}(m_1)\\r \neq i}}\dfrac{\vec{u}_r}{3}\right)
	\times \left(\overrightarrow{O P_j}-\overrightarrow{O G^{m_1}}+\dfrac{2}{3}\vec{u}_j-\sum_{\substack{r \in \mathscr{M}(m_1)\\r \neq j}}\dfrac{\vec{u}_r}{3}\right)\Bigg]\\
	&\quad \cdot \Bigg[\left(\overrightarrow{O P_j}-\overrightarrow{O G^{m_2}}+\dfrac{2}{3}\vec{u}_j-\sum_{\substack{r \in \mathscr{M}(m_2)\\r \neq j}}\dfrac{\vec{u}_r}{3}\right)
	\times \left(\overrightarrow{O P_i}-\overrightarrow{O G^{m_2}}+\dfrac{2}{3}\vec{u}_i-\sum_{\substack{r \in \mathscr{M}(m_2)\\r \neq i}}\dfrac{\vec{u}_r}{3}\right)\Bigg]\\
	&\approx \Bigg[(\overrightarrow{O P_i}-\overrightarrow{O G^{m_1}}) \times (\overrightarrow{O P_j}-\overrightarrow{O G^{m_1}})
	+(\overrightarrow{O P_i}-\overrightarrow{O G^{m_1}}) \times \left(\dfrac{2}{3} \vec{u}_j -\sum_{\substack{r \in \mathscr{M}(m_1)\\r \neq j}} \dfrac{\vec{u}_r}{3}\right)\\
	&\quad-(\overrightarrow{O P_j}-\overrightarrow{O G^{m_1}})\times \left(\dfrac{2}{3} \vec{u}_i -\sum_{\substack{r \in \mathscr{M}(m_1)\\r \neq i}} \dfrac{\vec{u}_r}{3}\right)\Bigg]\\
	&\quad \cdot \Bigg[(\overrightarrow{O P_j}-\overrightarrow{O G^{m_2}}) \times (\overrightarrow{O P_i}-\overrightarrow{O G^{m_2}})+(\overrightarrow{O P_j}-\overrightarrow{O G^{m_2}}) \times \left(\dfrac{2}{3} \vec{u}_i -\sum_{\substack{r \in \mathscr{M}(m_2)\\r \neq i}} \dfrac{\vec{u}_r}{3}\right)\\
	&\quad -(\overrightarrow{O P_i}-\overrightarrow{O G^{m_2}}) \times \left(\dfrac{2}{3} \vec{u}_j -\sum_{\substack{r \in \mathscr{M}(m_2)\\r \neq j}} \dfrac{\vec{u}_r}{3}\right)\Bigg]\\
	&\approx \left[(\overrightarrow{O P_i}-\overrightarrow{O G^{m_1}})\times(\overrightarrow{O P_j}-\overrightarrow{O G^{m_1}})\right] \cdot \left[(\overrightarrow{O P_j}-\overrightarrow{O G^{m_2}})\times(\overrightarrow{O P_i}-\overrightarrow{O G^{m_2}})\right]\\
	&\quad+\left((\overrightarrow{O P_i}-\overrightarrow{O G^{m_1}})\times(\overrightarrow{O P_j}-\overrightarrow{O G^{m_1}})\right) \cdot
	\left((\overrightarrow{O P_j}-\overrightarrow{O G^{m_2}})\times\left(\dfrac{2}{3} \vec{u}_i -\sum_{\substack{r \in \mathscr{M}(m_2)\\r \neq i}} \dfrac{\vec{u}_r}{3}\right)\right)\\
	&\quad-\left((\overrightarrow{O P_i}-\overrightarrow{O G^{m_1}})\times(\overrightarrow{O P_j}-\overrightarrow{O G^{m_1}})\right) \cdot
	\left((\overrightarrow{O P_i}-\overrightarrow{O G^{m_2}})\times\left(\dfrac{2}{3} \vec{u}_j -\sum_{\substack{r \in \mathscr{M}(m_2)\\r \neq j}} \dfrac{\vec{u}_r}{3}\right)\right)\\
	&\quad +\left((\overrightarrow{O P_i}-\overrightarrow{O G^{m_1}}) \times \left(\dfrac{2}{3}\vec{u}_j-\sum_{\substack{r \in \mathscr{M}(m_1)\\r \neq j}}\dfrac{\vec{u}_r}{3}\right)\right)\cdot\left((\overrightarrow{O P_j}-\overrightarrow{O G^{m_2}})\times(\overrightarrow{O P_i}-\overrightarrow{O G^{m_2}})\right)\\
	&\quad- \left((\overrightarrow{O P_j}-\overrightarrow{O G^{m_1}}) \times \left(\dfrac{2}{3} \vec{u}_i -\sum_{\substack{r \in \mathscr{M}(m_1)\\r \neq i}} \dfrac{\vec{u}_r}{3}\right)\right) \cdot \left((\overrightarrow{O P_j}-\overrightarrow{O G^{m_2}}) \times (\overrightarrow{O P_i}-\overrightarrow{O G^{m_2}})\right)\\
	&=(\overrightarrow{G^{m_1}P_i} \times \overrightarrow{G^{m_1}P_j}) \cdot (\overrightarrow{G^{m_2}P_j}\times\overrightarrow{G^{m_2}P_i})\\
	&\quad+(\overrightarrow{G^{m_1}P_i} \times \overrightarrow{G^{m_1}P_j}) \cdot \left[\overrightarrow{G^{m_2}P_j}\times \left(\dfrac{2}{3}\vec{u}_i+\sum_{\substack{r \in \mathscr{M}(m_2)\\r \neq i}}\dfrac{\vec{u}_r}{3}\right)\right]\\
	&\quad-(\overrightarrow{G^{m_1}P_i} \times \overrightarrow{G^{m_1}P_j}) \cdot \left[\overrightarrow{G^{m_2}P_i}\times \left(\dfrac{2}{3}\vec{u}_j+\sum_{\substack{r \in \mathscr{M}(m_2)\\r \neq j}}\dfrac{\vec{u}_r}{3}\right)\right]\\
	&\quad+\left[\overrightarrow{G^{m_1}P_i} \times \left(\dfrac{2}{3}\vec{u}_j+\sum_{\substack{r \in \mathscr{M}(m_1)\\r \neq j}}\dfrac{\vec{u}_r}{3}\right)\right]
	\cdot (\overrightarrow{G^{m_2}P_j} \times \overrightarrow{G^{m_2}P_i})\\
	&\quad-\left[\overrightarrow{G^{m_1}P_j} \times \left(\dfrac{2}{3}\vec{u}_i+\sum_{\substack{r \in \mathscr{M}(m_1)\\r \neq i}}\dfrac{\vec{u}_r}{3}\right)\right]
	\cdot (\overrightarrow{G^{m_2}P_j} \times \overrightarrow{G^{m_2}P_i}).
\end{align*}

As a result, we have that:

\begin{align*}
	|\alpha-\alpha'| & \approx \dfrac{\left|(\overrightarrow{G^{m_1} P_i} \times \overrightarrow{G^{m_1} P_j}) \times (\overrightarrow{G^{m_2} P_j}\times \overrightarrow{G^{m_2} P_i})\right|}{|\overrightarrow{G^{m_2} P_j} \times \overrightarrow{G^{m_2} P_i}|^4} \\
	&\quad \cdot \Bigg|((\overrightarrow{G^{m_1}P_i} \times \overrightarrow{G^{m_1}P_j})\times \overrightarrow{G^{m_2}P_j}) \cdot \left(\dfrac{2}{3}\vec{u}_i+\sum_{\substack{r \in \mathscr{M}(m_2)\\r \neq i}}\dfrac{\vec{u}_r}{3}\right)\\
	&\quad-((\overrightarrow{G^{m_1}P_i} \times \overrightarrow{G^{m_1}P_j})\times \overrightarrow{G^{m_2}P_i}) \cdot \left(\dfrac{2}{3}\vec{u}_j+\sum_{\substack{r \in \mathscr{M}(m_2)\\r \neq j}}\dfrac{\vec{u}_r}{3}\right)\\
	&\quad+((\overrightarrow{G^{m_2}P_j} \times \overrightarrow{G^{m_2}P_i})\times\overrightarrow{G^{m_1}P_i})\cdot\left(\dfrac{2}{3}\vec{u}_j+\sum_{\substack{r \in \mathscr{M}(m_1)\\r \neq j}}\dfrac{\vec{u}_r}{3}\right)\\
	&\quad-((\overrightarrow{G^{m_2}P_j} \times \overrightarrow{G^{m_2}P_i})\times\overrightarrow{G^{m_1}P_j})\cdot\left(\dfrac{2}{3}\vec{u}_i+\sum_{\substack{r \in \mathscr{M}(m_1)\\r \neq i}}\dfrac{\vec{u}_r}{3}\right)\Bigg|=:J_{m_1,m_2}.
\end{align*}

Observe that the constant 
$$
C:=\dfrac{\left|(\overrightarrow{G^{m_1} P_i} \times \overrightarrow{G^{m_1} P_j}) \times (\overrightarrow{G^{m_2} P_j}\times \overrightarrow{G^{m_2} P_i})\right|}{|\overrightarrow{G^{m_2} P_j} \times \overrightarrow{G^{m_2} P_i}|^4}
$$
is uniform with respect to the indices since the edges of the polyhedron under consideration all have the same length.

The total bending energy can thus be expressed in terms of the vertices displacements:
\begin{equation*}
	J_b(\bm{U})=\dfrac{k_b C^2}{2} \sum_{m_1=1}^{20}  \sum_{\substack{m_2 \neq m_1\\|\overrightarrow{G^{m_1}G^{m_2}}|=g}}|J_{m_1,m_2}|^2.
\end{equation*}

It is immediate to observe that the functional $J_b$ is a convex function of $\bm{U}$, and hence that $J_s(\bm{U}) +J_b(\bm{U})$ is strictly convex.

Finally, the last component of the total elastic energy is given by the variation of the Gaussian curvature, which takes the following form (cf., e.g., \cite{Helfrich1973})
$$
J_G=\dfrac{k_G}{2}\sum_{i=0}^{11} (K(P_i)-K(P_i')),
$$
where $k_G>0$ is the elastic modulus associated with the Gaussian curvature.

If the deformation is small enough, it is licit to \emph{assume} that the deformed reference configuration is still a convex polyhedron.
As a result, we have that
\begin{align*}
	J_G&=\dfrac{k_G}{2}\sum_{i=0}^{11} (K(P_i)-K(P_i'))=\dfrac{k_G}{2}\left\{\sum_{i=0}^{11}K(P_i) - \sum_{i=0}^{11}K(P_i')\right\}\\
	&=\dfrac{k_G}{2}(4 \pi - 4\pi)=0,
\end{align*}
where the second last equality holds thanks to \emph{the Descartes Lost Theorem} (Theorem~\ref{Descartes}). In conclusion, the energetic contribution due to the variation of the Gaussian curvature is \emph{exactly} equal to zero.

In conclusion, the total elastic energy associated with the displacement tensor $\bm{U}$ takes the following form:
\begin{equation*}
	J(\bm{U})=J_s(\bm{U}) +J_b(\bm{U}).
\end{equation*}

The functional $J$ is strictly convex since $J_s$ is strictly convex, and $J_b$ is convex.

The search for an equilibrium position for the deformed polygon amounts to minimizing the total elastic energy functional $J$.
In view of the geometrical constraint according to which the vertices must not cross the given flat surface, the admissible displacement fields are to be sought in the following set
\begin{align*}
	\mathcal{U}:=\bigg\{\bm{V}=(\vec{v}_i)_{i=1}^{11} \in \mathbb{R}^{33}; &\vec{v}_i=\begin{pmatrix}v_{i,1},v_{i,2},v_{i,3}\end{pmatrix}^T \textup{ and }\\
	&(\overrightarrow{OP_i}+\vec{v}_i)\cdot \vec{e}_3 \ge 0 \textup{ for all }1 \le i \le 11\bigg\}.
\end{align*}

It is straightforward to observe that the set $\mathcal{U}$ is non-empty (as $\bm{V}=\bm{0}\in \mathbb{R}^{33}\in \mathcal{U}$), closed, and convex. Recall that $\bm{F}=(\vec{f}_i)_{i=1}^{11} \in \mathbb{R}^{33}$ denotes the array of applied body forces acting on the regular icosahedron vertices (cf. at the beginning of Section~\ref{Sec:2}).

Therefore, the latter together with the fact that the total elastic energy functional is strictly convex, imply that the quadratic minimization problem 
\begin{equation*}
	\inf_{\bm{V} \in \mathcal{U}} (J(\bm{V}) -\bm{F} \cdot \bm{V})
\end{equation*}
admits a unique minimizer (cf., e.g., Proposition~1.2 of~\cite{EkelandTemam1999}). Finding the solution for this minimization problem is equivalent to finding a tensor $\bm{U}$ that solves the following variational inequalities:
\begin{equation*}
	\left(\left(\dfrac{k_s}{2} \Sigma^T \Sigma+\dfrac{k_b C^2}{2} \Theta^T \Theta\right) \bm{U}\right) \cdot (\bm{V}-\bm{U}) \ge \bm{F} \cdot (\bm{V}-\bm{U}),
	\quad\textup{ for all } \bm{V} \in \mathcal{U}.
\end{equation*}

Both the matrices $\Sigma$ and $\Theta$ have 20 rows and 33 columns. They are associated with the formulation of the stretching and bending energy, respectively.
Given the high number of variables entering the problem, it is not easy to explicitly write down these matrices without the  aid of computing software, as it was for the two-dimensional simplified case treated in the paper~\cite{PieWhiDraTem2021}. The numerical experiments that we will implement in the forthcoming sections make use of the matrix formulation for the variational inequalities stated above.

Since the functional $J_s$ is strictly convex, it is clear on the one hand that the square matrix $\Sigma^T \Sigma$, which counts 33 rows and 33 columns, is strictly positive-definite (in the sense that the smallest eigenvalue is greater than zero). On the other hand, the square matrix $\Theta^T \Theta$ is a priori only non-negative definite (in the sense that the smallest eigenvalue is greater or equal than zero). With the aid of computing software, it can be indeed verified that the determinant of the non-negative definite matrix $\Theta^T \Theta$ is of the order $10^{-231}$, which is zero in view of the precision of the calculations.

This lets us infer that there cannot be bending without stretching. The latter statement makes sense from both the physical and geometrical points of view, since it is not possible to change the inclination of the faces of a regular icosahedron without changing the lengths of its edges.

\section{Numerical experiments}
\label{Sec:3}

In this section we report on two batches of numerical experiments intended to test the model presented in Section~\ref{Sec:3}.

We consider different instances of the array  of applied body forces whose tangential components are equal zero and whose transverse component are directed downwards.
The values for the elastic constants are $k_s=0.25$ and $k_b=1.7$. We assume the length of each edge $\ell$ to be equal to $3$.

The first batch of numerical experiments that we conducted on the proposed model is classical, and amounts to finding the position of the deformed reference configuration of the icosahedron undergoing the action of an applied body force $\bm{F}=(\vec{f}_i)_{i=1}^{11}$ which acts on each vertex with the same magnitude. 
Recall that the undeformed reference configuration of the icosahedral cage is in contact with the obstacle at one point, and that this point is not displaced.


In the second batch of experiments, the force acting on the icosahedral cage is released, and we compute the equilibrium position of the cage under the constraint that the points which are in contact with the obstacle at the end of the first experiment continue remaining in contact with the obstacle for the whole duration of the second experiment.

The two experiments are summarized in the diagrams below:

\begin{algorithm}[H]
	\caption{}
	\begin{algorithmic}[!H]
		\Input
		\Desc{Undeformed reference configuration of the icosahedral viral cage as in Figure~\ref{fig:0}}
		\Desc{Applied body force $\bm{F}=(\vec{f}_i)_{i=1}^{11}$}
		\EndInput
		\\
		\State Compute the deformation associated with the input $\bm{F}$ via the primal-dual active set method~\cite{SunYuan2006}
		\\
		\Output
		\Desc{Deformed reference configuration of the icosahedral viral cage}
		\EndOutput
	\end{algorithmic}
	\label{algo1}
\end{algorithm}

\begin{algorithm}[H]
	\caption{}
	\begin{algorithmic}[!H]
		\Input
		\Desc{Deformed reference configuration of the icosahedral viral cage (Output of Algorithm~\ref{algo1})}
		\EndInput
		\\
		\State Compute the equilibrium configuration subjected to the constraint that points in the input that are in contact with the obstacle continue remaining in contact with the obstacle 
		\\
		\Output
		\Desc{Equilibrium shape of the icosahedral viral cage with points in contact}
		\EndOutput
	\end{algorithmic}
	\label{algo2}
\end{algorithm}

In Figure~\ref{fig:3} below, each row displays the output of the two batches of numerical experiments described beforehand. For each row, starting from the left, the first two figures illustrate the deformed reference configuration, seen from the top and from the side, respectively, of the icosahedral cage when it undergoes the action of a vertical applied body force $\bm{F}=(\vec{f}_i)_{i=1}^{11}$. The third figure represents the equilibrium shape recovered by releasing the force acting on the deformed icosahedral cage, and by minimizing the total energy functional $J$ subjected to the constraint that the points of the deformed reference configuration that are in contact with the obstacle at the end of the first experiment continue remaining in contact with the obstacle for the whole duration of the second experiment.

First, we observe that, as the magnitude of the applied body forces acting on the vertices of the icosahedron increases, the height of the top point of the reference configuration decreases.

Secondly, we observe that, as expected, the equilibrium shape corresponding to two different references configurations which have the same number of points in contact with the obstacle is the same.

\begin{figure}[H]
	\centering
	\makebox[0pt][c]{
	\begin{subfigure}[b]{0.4\linewidth}
		\includegraphics[width=1.0\linewidth]{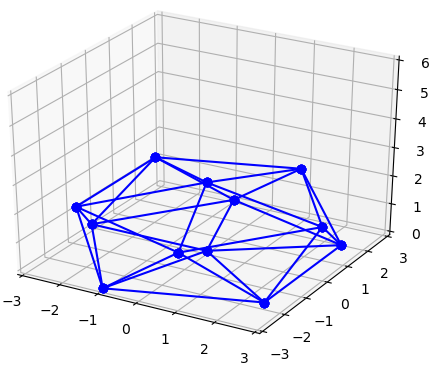}
		\subcaption{\scriptsize Deformation of the viral capsid seen from above for $f_{i,3}=-5.3$}
		\label{sub:1:1}
	\end{subfigure}%
	\hspace{0.5cm}
	\begin{subfigure}[b]{0.4\linewidth}
		\includegraphics[width=1.0\linewidth]{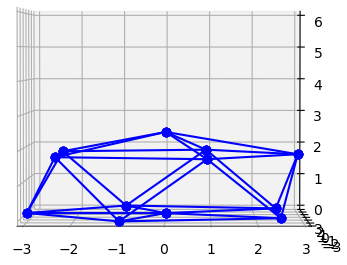}
		\subcaption{\scriptsize Deformation of the viral capsid seen from the side for $f_{i,3}=-5.3$}
		\label{sub:1:2}
	\end{subfigure}%
	\hspace{0.5cm}
	\begin{subfigure}[b]{0.4\linewidth}
		\includegraphics[width=1.0\linewidth]{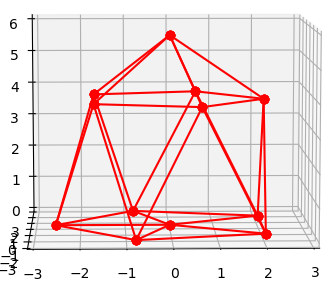}
		\subcaption{\scriptsize Equilibrium shape of the viral capsid corresponding to $f_{i,3}=-5.3$}
		\label{sub:1:3}
	\end{subfigure}%
	}
	\caption{Deformations of a regular icosahedron corresponding to a purely vertical applied body force such that $\vec{f}_i=(0,0,-5.3)$ for all $1\le i \le 11$, and restoration of the equilibrium position. Figures~\ref{sub:1:1} and~\ref{sub:1:2} depict the deformed reference configuration output by an implementation of Algorithm~\ref{algo1} seen from the above and from the side, respectively. Figure~\ref{sub:1:3}, instead, depicts the output of Algorithm~\ref{algo2}. In this case equilibrium configuration associated with the displacement field that minimizes the total elastic energy subjected to the constraint that the points that are in contact with the obstacle.}
\end{figure}

\begin{figure}[H]
	\centering
	\makebox[0pt][c]{
	\begin{subfigure}[b]{0.4\linewidth}\ContinuedFloat
		\includegraphics[width=1.0\linewidth]{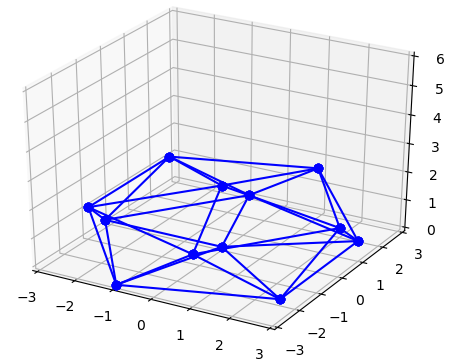}
		\subcaption{Deformation of the viral capsid seen from above for $f_{i,3}=-5.7$}
		\label{sub:2:1}
	\end{subfigure}%
	\hspace{0.5cm}
	\begin{subfigure}[b]{0.4\linewidth}\ContinuedFloat
		\includegraphics[width=1.0\linewidth]{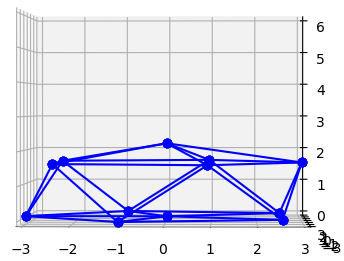}
		\subcaption{Deformation of the viral capsid seen from the side for $f_{i,3}=-5.7$}
		\label{sub:2:2}
	\end{subfigure}%
	\hspace{0.5cm}
	\begin{subfigure}[b]{0.4\linewidth}\ContinuedFloat
		\includegraphics[width=1.0\linewidth]{./Figures/Equilibrium}
		\subcaption{Equilibrium shape of the viral capsid corresponding to $f_{i,3}=-5.7$}
		\label{sub:2:3}
	\end{subfigure}%
	}
	\caption{Deformations of a regular icosahedron corresponding to a purely vertical applied body force such that $\vec{f}_i=(0,0,-5.7)$ for all $1\le i \le 11$, and restoration of the equilibrium position. Figures~\ref{sub:2:1} and~\ref{sub:2:2} depict the deformed reference configuration output by an implementation of Algorithm~\ref{algo1} seen from the above and from the side, respectively. Figure~\ref{sub:2:3}, instead, depicts the output of Algorithm~\ref{algo2}. In this case equilibrium configuration associated with the displacement field that minimizes the total elastic energy subjected to the constraint that the points that are in contact with the obstacle.}
\end{figure}

\begin{figure}[H]
	\centering
	\makebox[0pt][c]{
	\begin{subfigure}[b]{0.4\linewidth}
		\includegraphics[width=1.0\linewidth]{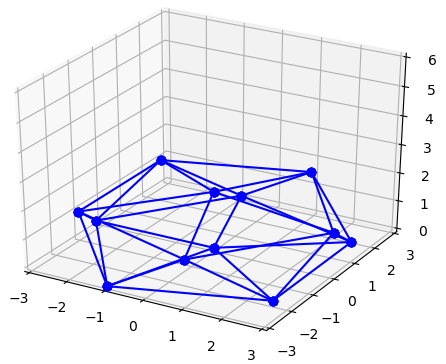}
		\subcaption{Deformation of the viral capsid seen from above for $f_{i,3}=-6.0$}
		\label{sub:3:1}
	\end{subfigure}%
	\hspace{0.5cm}
	\begin{subfigure}[b]{0.4\linewidth}\ContinuedFloat
		\includegraphics[width=1.0\linewidth]{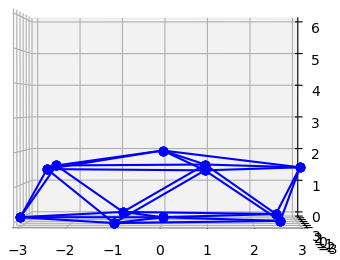}
		\subcaption{Deformation of the viral capsid seen from the side for $f_{i,3}=-6.0$}
		\label{sub:3:2}
	\end{subfigure}%
	\hspace{0.5cm}
	\begin{subfigure}[b]{0.4\linewidth}\ContinuedFloat
		\includegraphics[width=1.0\linewidth]{./Figures/Equilibrium}
		\subcaption{Equilibrium shape of the viral capsid corresponding to $f_{i,3}=-6.0$}
		\label{sub:3:3}
	\end{subfigure}%
	}
	\caption{Deformations of a regular icosahedron corresponding to a purely vertical applied body force such that $\vec{f}_i=(0,0,-6.0)$ for all $1\le i \le 11$, and restoration of the equilibrium position. Figures~\ref{sub:3:1} and~\ref{sub:3:2} depict the deformed reference configuration output by an implementation of Algorithm~\ref{algo1} seen from the above and from the side, respectively. Figure~\ref{sub:3:3}, instead, depicts the output of Algorithm~\ref{algo2}. In this case equilibrium configuration associated with the displacement field that minimizes the total elastic energy subjected to the constraint that the points that are in contact with the obstacle.}
\end{figure}

\begin{figure}[H]
	\centering
	\makebox[0pt][c]{
	\begin{subfigure}[b]{0.4\linewidth}
		\includegraphics[width=1.0\linewidth]{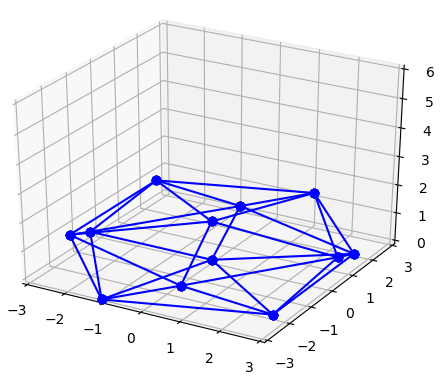}
		\subcaption{Deformation of the viral capsid seen from above for $f_{i,3}=-7.0$}
		\label{sub:4:1}
	\end{subfigure}%
	\hspace{0.5cm}
	\begin{subfigure}[b]{0.4\linewidth}\ContinuedFloat
		\includegraphics[width=1.0\linewidth]{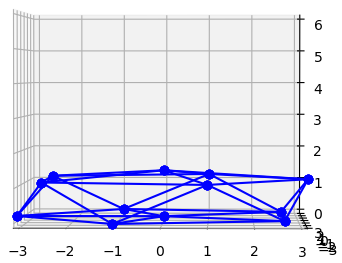}
		\subcaption{Deformation of the viral capsid seen from the side for $f_{i,3}=-7.0$}
		\label{sub:4:2}
	\end{subfigure}%
	\hspace{0.5cm}
	\begin{subfigure}[b]{0.4\linewidth}\ContinuedFloat
		\includegraphics[width=1.0\linewidth]{./Figures/Equilibrium}
		\subcaption{Equilibrium shape of the viral capsid corresponding to $f_{i,3}=-7.0$}
		\label{sub:4:3}
	\end{subfigure}%
	}
	\caption{Deformations of a regular icosahedron corresponding to a purely vertical applied body force such that $\vec{f}_i=(0,0,-7.0)$ for all $1\le i \le 11$, and restoration of the equilibrium position. Figures~\ref{sub:4:1} and~\ref{sub:4:2} depict the deformed reference configuration output by an implementation of Algorithm~\ref{algo1} seen from the above and from the side, respectively. Figure~\ref{sub:4:3}, instead, depicts the output of Algorithm~\ref{algo2}. In this case equilibrium configuration associated with the displacement field that minimizes the total elastic energy subjected to the constraint that the points that are in contact with the obstacle.}
	\label{fig:3}
\end{figure}

\section{Deformation of viral capsids subjected to a confinement condition in the time-dependent case}
\label{Sec:4}

In this section we present a time-dependent model describing the evolution of the deformation of a viral capsid subjected to the action of an applied body force, and confined not to cross a given rigid surface on which it lies in correspondence of one point at the beginning of the observation. Moreover, we recall that, experimental data led us to assume the contact point does not change its position during the deformation.
This means that $\overrightarrow{O P_i}\cdot\vec{e}_3 > 0$ for all $1\le i \le 11$.
We limit ourselves to observing the deformation in a finite length time interval of the form $[0,T]$, with $T>0$ given.

The model is hyperbolic, since it takes into account the evolution of the displacement. The formulation of the corresponding variational problem and the concept of solutions are, however, \emph{a priori} not clear. For this reason, we will recover the rigorous concept of solution for the problem under consideration and we will prove the existence of solutions for one such model by resorting to a technique based on the penalty method which was originally developed for the continuum case in the paper~\cite{BockJarSil2016}. 


By virtue of the physical model under consideration, we require the displacement $\bm{U}=(\vec{u}_i)_{i=1}^{11}:[0,T] \to \mathbb{R}^{33}$ to be such that $(\overrightarrow{O P_i}+\vec{u}_i(t)) \cdot \vec{e}_3 \ge 0$, for all, or almost all (a.a. in what follows) $t \in [0,T]$.

Define the set 
$$
K:=\{\bm{V}=(\vec{v}_i)_{i=1}^{11} \in \mathbb{R}^{33}; (\overrightarrow{O P_i} +\vec{v}_i)\cdot \vec{e}_3 \ge 0 \textup{ for all }1 \le i \le 11\},
$$
and define the set
$$
\mathcal{K}:=\{\bm{V}=(\vec{v}_i)_{i=1}^{11} \in \mathcal{C}^0([0,T];\mathbb{R}^{33}); \bm{V}(t) \in K \textup{ for a.a. }t\in (0,T)\}.
$$

Define the linear operator $\gamma: L^2(0,T;\mathbb{R}^3) \to L^2(0,T)$ in a way such that 
$$
\gamma(\vec{v})(t):=\vec{v}(t) \cdot \vec{e}_3 \textup{ for a.a. } t \in (0,T).
$$

The operator $\gamma$ is clearly bounded by the Cauchy-Schwarz inequality, and thus continuous. As a result, the operator $\gamma$ admits a uniquely determined Hilbert adjoint, which we denote by $\gamma^\ast$ (cf., e.g., Theorem~4.7-2 of~\cite{PGCLNFAA}).

For all $1\le i \le 11$, define the operator $\beta_i:L^2(0,T) \to L^2(0,T)$ by
$$
\beta_i(f):=-\{\overrightarrow{O P_i} \cdot\vec{e}_3 +f\}^{-},\quad\textup{ for all }f \in L^2(0,T).
$$

The following properties for the operators $\beta_i$ can be established.

\begin{theorem}
\label{betai}
For all $1\le i \le 11$, the operator $\beta_i$ is monotone, bounded and Lipschitz continuous with Lipschitz constant $L=1$.
\end{theorem}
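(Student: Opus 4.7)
The plan is to reduce each of the three claims (monotonicity, boundedness, Lipschitz continuity) to pointwise (a.e.\ in $t$) properties of the scalar map $\phi_i:\mathbb{R}\to\mathbb{R}$ defined by $\phi_i(s):=-(c_i+s)^-$, where $c_i:=\overrightarrow{OP_i}\cdot\vec{e}_3$ is a fixed real constant. Indeed, by the very definition of $\beta_i$, one has $(\beta_i(f))(t)=\phi_i(f(t))$ for a.a.\ $t\in(0,T)$, so the infinite-dimensional statement will follow from an elementary one-variable analysis followed by integration in $t$.

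First I would verify the two elementary analytic properties of $\phi_i$. Rewriting $\phi_i(s)=\min\{c_i+s,0\}$ makes it transparent that $\phi_i$ is nondecreasing on $\mathbb{R}$; a short case-by-case inspection on the signs of $c_i+s_1$ and $c_i+s_2$ yields both $(\phi_i(s_1)-\phi_i(s_2))(s_1-s_2)\ge 0$ and $|\phi_i(s_1)-\phi_i(s_2)|\le|s_1-s_2|$ for all $s_1,s_2\in\mathbb{R}$. (Alternatively, the Lipschitz estimate follows from the standard inequality $||a^-|-|b^-||\le|a-b|$ applied with $a=c_i+s_1$, $b=c_i+s_2$.) These are the only nontrivial ingredients.

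Next I would lift these pointwise facts to $L^2(0,T)$. Lipschitz continuity is obtained by squaring the pointwise estimate $|\phi_i(f(t))-\phi_i(g(t))|\le|f(t)-g(t)|$ and integrating over $(0,T)$, giving $\|\beta_i(f)-\beta_i(g)\|_{L^2(0,T)}\le\|f-g\|_{L^2(0,T)}$, hence the Lipschitz constant $L=1$. Monotonicity follows from integrating the pointwise inequality $(\phi_i(f(t))-\phi_i(g(t)))(f(t)-g(t))\ge 0$ to obtain $(\beta_i(f)-\beta_i(g),f-g)_{L^2(0,T)}\ge 0$. For boundedness (interpreted here as ``maps bounded sets to bounded sets,'' the customary meaning for nonlinear operators), I would simply apply the Lipschitz estimate with $g=0$ and note that $\beta_i(0)=-c_i^-\mathbf{1}_{(0,T)}\in L^2(0,T)$, so that $\|\beta_i(f)\|_{L^2(0,T)}\le\|f\|_{L^2(0,T)}+|c_i^-|\sqrt{T}$ for every $f\in L^2(0,T)$.

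There is no real obstacle in this proof: the statement is structural, and once one identifies the underlying scalar Nemytskii map $\phi_i$ and observes that it is 1-Lipschitz and monotone nondecreasing, the three properties in $L^2(0,T)$ transfer by integration. The only point to be a little careful about is that $\phi_i$ is monotone even though it is not strictly monotone (it is constant equal to $0$ on $[-c_i,+\infty)$); this does not affect the monotonicity estimate, which only requires the nondecreasing character of $\phi_i$.
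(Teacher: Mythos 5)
Your proof is correct, and it takes a cleaner route than the paper's. You reduce all three claims to pointwise properties of the scalar map $\phi_i(s)=-(c_i+s)^-=\min\{c_i+s,0\}$ with $c_i=\overrightarrow{OP_i}\cdot\vec{e}_3$, i.e.\ you treat $\beta_i$ as a Nemytskii operator generated by a nondecreasing, $1$-Lipschitz function, and then transfer the inequalities to $L^2(0,T)$ by integration; boundedness follows from the Lipschitz estimate applied with $g=0$. The paper instead works directly in $L^2(0,T)$: for monotonicity it expands $\int(\beta_i(f)-\beta_i(g))(f-g)\dd t$ and splits the domain into the measurable sets where $c_i+f$ and $c_i+g$ have definite signs, eventually bounding the expression below by an integral of a square; for boundedness it uses a duality characterization of the $L^2$ norm together with the Cauchy--Schwarz inequality; and for the Lipschitz constant it uses the algebraic identity $-x^-=\tfrac{x-|x|}{2}$ combined with the Minkowski and triangle inequalities. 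The content is the same, but your factorization through the one-variable function avoids the case analysis over sets and the duality argument, and makes the constant $L=1$ immediate; the paper's computation, while heavier, keeps everything explicit at the level of the integrals that reappear later in the penalization estimates. One small point in your favor: your boundedness bound $\|\beta_i(f)\|_{L^2(0,T)}\le\|f\|_{L^2(0,T)}+|c_i^-|\sqrt{T}$ carries the correct factor $\sqrt{T}$ coming from the norm of a constant function on $(0,T)$.
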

\begin{proof}
Fix $1\le i \le 11$.
For the sake of brevity, sets of the form $\{f \lesssim 0 \, \& \, g \lesssim 0\}$ denote the sets:
$$
\{t\in (0,T); \overrightarrow{O P_i}\cdot \vec{e}_3 + f(t) \le 0\} \cap \{t\in(0,T);\overrightarrow{O P_i}\cdot \vec{e}_3 + g(t) \le 0\}.
$$

For proving the monotonicity, we observe that for all $f,g \in L^2(0,T)$ we have:
\begin{equation*}
\begin{aligned}
&\int_{0}^{T} (\beta_i(f)-\beta_i(g)) (f-g) \dd t\\
=&\int_{0}^{T} \left((-\{\overrightarrow{O P_i} \cdot\vec{e}_3 +f\}^{-})-(-\{\overrightarrow{O P_i} \cdot\vec{e}_3 +g\}^{-})\right) \left((\overrightarrow{O P_i} \cdot\vec{e}_3 +f)-(\overrightarrow{O P_i} \cdot\vec{e}_3 +g)\right) \dd t\\
=&\int_{0}^{T} \left|-\{\overrightarrow{O P_i} \cdot\vec{e}_3 +f\}^{-}\right|^2 \dd t + \int_{0}^{T} \left|-\{\overrightarrow{O P_i} \cdot\vec{e}_3 +g\}^{-}\right|^2 \dd t\\
&\quad -\int_{0}^{T} \left(-\{\overrightarrow{O P_i} \cdot\vec{e}_3 +f\}^{-}\right) \left(\overrightarrow{O P_i} \cdot\vec{e}_3 +g\right) \dd t\\
&\quad-\int_{0}^{T} \left(-\{\overrightarrow{O P_i} \cdot\vec{e}_3 +g\}^{-}\right) \left(\overrightarrow{O P_i} \cdot\vec{e}_3 +f\right) \dd t\\
=&\int_{0}^{T} \left|-\{\overrightarrow{O P_i} \cdot\vec{e}_3 +f\}^{-}\right|^2 \dd t + \int_{0}^{T} \left|-\{\overrightarrow{O P_i} \cdot\vec{e}_3 +g\}^{-}\right|^2 \dd t\\
&\quad -\int_{\{f \lesssim 0 \, \& \, g \lesssim 0\}} \left(-\{\overrightarrow{O P_i} \cdot\vec{e}_3 +f\}^{-}\right) \left(-\{\overrightarrow{O P_i} \cdot\vec{e}_3 +g\}^{-}\right) \dd t\\
&\quad -\int_{\{f \lesssim 0 \, \& \, g \gtrsim 0\}} \left(-\{\overrightarrow{O P_i} \cdot\vec{e}_3 +f\}^{-}\right) \left(\{\overrightarrow{O P_i} \cdot\vec{e}_3 +g\}^{+}\right) \dd t\\
&\quad-\int_{\{f \lesssim 0 \, \& \, g \lesssim 0\}} \left(-\{\overrightarrow{O P_i} \cdot\vec{e}_3 +g\}^{-}\right) \left(-\{\overrightarrow{O P_i} \cdot\vec{e}_3 +f\}^{-}\right) \dd t\\
&\quad-\int_{\{f \gtrsim 0 \, \& \, g \lesssim 0\}} \left(-\{\overrightarrow{O P_i} \cdot\vec{e}_3 +g\}^{-}\right) \left(\{\overrightarrow{O P_i} \cdot\vec{e}_3 +f\}^{+}\right) \dd t\\
&\ge \int_{\{f \lesssim 0 \, \& \, g \lesssim 0\}} \left|-\{\overrightarrow{O P_i} \cdot\vec{e}_3 +f\}^{-}\right|^2 \dd t + \int_{\{f \lesssim 0 \, \& \, g \lesssim 0\}} \left|-\{\overrightarrow{O P_i} \cdot\vec{e}_3 +g\}^{-}\right|^2 \dd t\\
&\quad-2\int_{\{f \lesssim 0 \, \& \, g \lesssim 0\}} \left(-\{\overrightarrow{O P_i} \cdot\vec{e}_3 +f\}^{-}\right) \left(-\{\overrightarrow{O P_i} \cdot\vec{e}_3 +g\}^{-}\right) \dd t\\
=&\int_{\{f \lesssim 0 \, \& \, g \lesssim 0\}} \left|\left(-\{\overrightarrow{O P_i} \cdot\vec{e}_3 +f\}^{-}\right)-\left(-\{\overrightarrow{O P_i} \cdot\vec{e}_3 +g\}^{-}\right)\right|^2 \dd t \ge0.
\end{aligned}
\end{equation*}

For establishing the boundedness, we show that each nonlinear mapping $\beta_i$ maps bounded sets onto bounded sets. To see this, let $\mathcal{F} \subset L^2(0,T)$ be a bounded subset and evaluate, for all $1 \le i \le 11$ and all $f \in \mathcal{F}$,
\begin{align*}
\|\beta_i(f)\|_{L^2(0,T)}&=\sup_{\substack{g \in L^2(0,T)\\\|g\|_{L^2(0,T)}=1}} \left|\int_{0}^{T} \beta_i(f) g \dd t\right| \le \|\{\overrightarrow{O P_i}\cdot \vec{e}_3+f\}^{-}\|_{L^2(0,T)}\\
&\le T \max_{1 \le i \le 11}\{\overrightarrow{O P_i} \cdot \vec{e}_3\} +\|f\|_{L^2(0,T)},
\end{align*}
where the second last inequality is obtained as a result of an application of the Cauchy-Schwarz inequality.

The boundedness of the family $\mathcal{F}$ and the uniform boundedness of $\max_{1 \le i \le 11}\{\overrightarrow{O P_i} \cdot \vec{e}_3\}$ in turn imply that there actually exists a constant $c>0$ independent of $i$ for which
$$
\max_{1 \le i \le 11} \|\beta_i(f)\|_{L^2(0,T)} \le c,
$$
thus establishing the boundedness for all the functions $\beta_i$ by means of the same uniform constant.

Finally, in order to establish the Lipschitz continuity, for each $1 \le i \le 11$ we compute
\begin{align*}
&\left(\int_{0}^{T} |\beta_i(f)-\beta_i(g)|^2 \dd t\right)^{1/2} = \left(\int_{0}^{T} \left|\left(-\{\overrightarrow{O P_i} \cdot\vec{e}_3 +f\}^{-}\right)-\left(-\{\overrightarrow{O P_i} \cdot\vec{e}_3 +g\}^{-}\right)\right|^2 \dd t\right)^{1/2}\\
&=\left(\int_{0}^{T}\left|\dfrac{(\overrightarrow{O P_i} \cdot\vec{e}_3 +f)-|\overrightarrow{O P_i} \cdot\vec{e}_3 +f|}{2} - \dfrac{(\overrightarrow{O P_i} \cdot\vec{e}_3 +g)-|\overrightarrow{O P_i} \cdot\vec{e}_3 +g|}{2}\right|^2 \dd t\right)^{1/2}\\
&=\left(\int_{0}^{T}\left|\dfrac{(\overrightarrow{O P_i} \cdot\vec{e}_3 +f)-(\overrightarrow{O P_i} \cdot\vec{e}_3 +g)}{2} - \dfrac{|\overrightarrow{O P_i} \cdot\vec{e}_3 +f|-|\overrightarrow{O P_i} \cdot\vec{e}_3 +g|}{2}\right|^2 \dd t\right)^{1/2}\\
&\le \dfrac{1}{2}\|f-g\|_{L^2(0,T)} +\dfrac{1}{2}\left(\int_{0}^{T}\left||\overrightarrow{O P_i} \cdot\vec{e}_3 +f|-|\overrightarrow{O P_i} \cdot\vec{e}_3 +g|\right|^2 \dd t\right)^{1/2}\\
& \le \dfrac{1}{2}\|f-g\|_{L^2(0,T)} +\dfrac{1}{2}\left(\int_{0}^{T}\left|(\overrightarrow{O P_i} \cdot\vec{e}_3 +f)-(\overrightarrow{O P_i} \cdot\vec{e}_3 +g)\right|^2 \dd t\right)^{1/2}\\
&= \dfrac{1}{2}\|f-g\|_{L^2(0,T)}+ \dfrac{1}{2}\|f-g\|_{L^2(0,T)}=\|f-g\|_{L^2(0,T)},
\end{align*}
where the first inequality is obtained as a result of an application of the Minkowski inequality, and the second last inequality is a direct consequence of the triangle inequality for the absolute value.

The Lipschitz continuity is thus established and we note in passing that the Lipschitz constant is equal to one, as it was to be proved.
\end{proof}

Thanks to the properties established in Theorem~\ref{betai}, we are in a position to define the nonlinear operator $\bm{N}:L^2(0,T;\mathbb{R}^{33}) \to L^2(0,T;\mathbb{R}^{33})$ by
$$
\bm{N}(\bm{V}):=\left((\gamma^\ast \beta_i \gamma)(\vec{v}_i)\right)_{i=1}^{11}, \quad\textup{ for all }\bm{V}=(\vec{v}_i)_{i=1}^{11} \in L^2(0,T;\mathbb{R}^{33}).
$$

It comes natural to define the following duality product
$$
\langle\langle \bm{N}(\bm{V}), \bm{W} \rangle\rangle_{L^2(0,T);\mathbb{R}^{33},L^2(0,T);\mathbb{R}^{33}} := \sum_{i=1}^{11} \int_{0}^{T} (\beta_i \gamma)(\vec{v}_i) \gamma(\vec{w}_i)\dd t,
$$
for all $\bm{V}=(\vec{v}_i)_{i=1}^{11}$ and all $\bm{W}=(\vec{w}_i)_{i=1}^{11}$ in $L^2(0,T;\mathbb{R}^{33})$.

The following properties easily descend from Theorem~\ref{betai}.

\begin{corollary}
\label{N}
The nonlinear operator $\bm{N}$ is monotone, bounded and Lipschitz continuous from $L^2(0,T;\mathbb{R}^{33})$ into itself.
\end{corollary}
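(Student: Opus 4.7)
The plan is to push each of the three properties from the $\beta_i$'s (established in Theorem~\ref{betai}) through the sandwich $\gamma^\ast \beta_i \gamma$ in a componentwise fashion, using the duality product given right above the statement and the boundedness of the linear operator $\gamma$.

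First I would handle monotonicity, which is essentially immediate from the definition of the duality product. For $\bm{V}=(\vec{v}_i)_i$ and $\bm{W}=(\vec{w}_i)_i$ in $L^2(0,T;\mathbb{R}^{33})$, I would write
\begin{equation*}
\langle\langle \bm{N}(\bm{V})-\bm{N}(\bm{W}), \bm{V}-\bm{W}\rangle\rangle = \sum_{i=1}^{11}\int_0^T \bigl(\beta_i(\gamma \vec{v}_i)-\beta_i(\gamma \vec{w}_i)\bigr)\bigl(\gamma \vec{v}_i - \gamma \vec{w}_i\bigr)\dd t,
\end{equation*}
invoke the linearity of $\gamma$ (so $\gamma \vec{v}_i-\gamma \vec{w}_i = \gamma(\vec{v}_i-\vec{w}_i)$ lives in $L^2(0,T)$), and then apply the monotonicity of each $\beta_i$ from Theorem~\ref{betai} to each summand to conclude that the expression is non-negative.

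For Lipschitz continuity I would use that $\gamma:L^2(0,T;\mathbb{R}^3)\to L^2(0,T)$ is continuous with operator norm at most $1$ (by Cauchy--Schwarz applied pointwise), hence so is its Hilbert adjoint $\gamma^\ast$. Combining this with $\mathrm{Lip}(\beta_i)=1$ from Theorem~\ref{betai}, I would estimate componentwise
\begin{equation*}
\|\gamma^\ast \beta_i(\gamma \vec{v}_i) - \gamma^\ast \beta_i(\gamma \vec{w}_i)\|_{L^2(0,T;\mathbb{R}^3)} \le \|\gamma^\ast\|\,\|\beta_i(\gamma \vec{v}_i)-\beta_i(\gamma \vec{w}_i)\|_{L^2(0,T)} \le \|\vec{v}_i-\vec{w}_i\|_{L^2(0,T;\mathbb{R}^3)},
\end{equation*}
and then sum (or take max) over $1\le i\le 11$ to obtain a Lipschitz constant for $\bm{N}$ on the product space (a fixed multiple of $11$ or $\sqrt{11}$, depending on the chosen norm).

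Boundedness (mapping bounded sets to bounded sets) will then be a direct consequence: either one derives it from Lipschitz continuity by fixing the reference point $\bm{W}=\bm{0}$ and noting $\|\bm{N}(\bm{0})\|$ is finite by the boundedness assertion in Theorem~\ref{betai}, or one applies the same componentwise sandwich estimate directly to the bound $\|\beta_i(\gamma \vec{v}_i)\|_{L^2(0,T)}\le c$ supplied there. I expect no genuine obstacle: the whole point of Theorem~\ref{betai} was to isolate the nonlinear difficulty into the scalar maps $\beta_i$, so the corollary is really just a bookkeeping step that tracks these properties through the bounded linear wrappers $\gamma$ and $\gamma^\ast$ and the finite direct sum over the eleven moving vertices. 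The only mild care needed is in making the constants uniform in $i$, which is already guaranteed by Theorem~\ref{betai}.
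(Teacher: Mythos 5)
Your proposal is correct and follows essentially the same route as the paper: monotonicity via the duality product and the componentwise monotonicity of the $\beta_i$, Lipschitz continuity by chaining the bounds on $\gamma$, $\gamma^\ast$ and $\mathrm{Lip}(\beta_i)=1$ componentwise, and boundedness as a byproduct of the composition of bounded maps. The only differences are cosmetic (you make the operator norms explicit and derive boundedness from the Lipschitz estimate, whereas the paper uses generic constants $C_\gamma$, $C_{\gamma^\ast}$ and cites boundedness of the composition directly).
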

\begin{proof}
To verify the monotonicity, it suffices to observe that the linearity of the adjoint $\gamma^\ast$ gives:
\begin{align*}
&\langle\langle \bm{N}(\bm{V}) - \bm{N}(\bm{W}), \bm{V} - \bm{W} \rangle\rangle_{L^2(0,T);\mathbb{R}^{33},L^2(0,T);\mathbb{R}^{33}}\\
&= \sum_{i=1}^{11} \int_{0}^{T} \left((\beta_i \gamma)(\vec{v}_i) - (\beta_i \gamma)(\vec{w}_i)\right) (\gamma(\vec{v}_i) - \gamma(\vec{w}_i)) \dd t\\
&\ge \sum_{i=1}^{11} \|\gamma(\vec{v}_i) - \gamma(\vec{w}_i)\|_{L^2(0,T)}^2 \ge 0,
\end{align*}
for all $\bm{V}=(\vec{v}_i)_{i=1}^{11}$ and all $\bm{W}=(\vec{w}_i)_{i=1}^{11}$ in $L^2(0,T;\mathbb{R}^{33})$.

The boundedness is immediate to verify since each operator $\gamma^\ast \beta_i \gamma: L^2(0,T;\mathbb{R}^{33}) \to L^2(0,T;\mathbb{R}^{33})$ is a composition of bounded operators.

For verifying the Lipschitz continuity, let us evaluate
\begin{align*}
&\|\bm{N}(\bm{V}) - \bm{N}(\bm{W})\|_{L^2(0,T;\mathbb{R}^{33})}=\left\|\sum_{i=1}^{11}\left|\gamma^\ast\left((\beta_i \gamma)(\vec{v}_i)-(\beta_i \gamma)(\vec{w}_i)\right)\right|\right\|_{L^2(0,T)}\\
&\le C_{\gamma^\ast} \sum_{i=1}^{11}\|\beta_i(\gamma(\vec{v}_i))-\beta_i(\gamma(\vec{w}_i))\|_{L^2(0,T)} \le  C_{\gamma^\ast} \sum_{i=1}^{11}\|\gamma(\vec{v}_i)-\gamma(\vec{w}_i)\|_{L^2(0,T)}\\
& \le C_{\gamma}C_{\gamma^\ast} \|\bm{V}-\bm{W}\|_{L^2(0,T;\mathbb{R}^{33})},
\end{align*}
for all $\bm{V}=(\vec{v}_i)_{i=1}^{11}$ and all $\bm{W}=(\vec{w}_i)_{i=1}^{11}$ in $L^2(0,T;\mathbb{R}^{33})$.
\end{proof}

We now recall a very important inequality which is used to study evolutionary problems: Gronwall's inequality (see the seminal paper~\cite{GW} or, for instance, Theorem~1.1 in Chapter~III of~\cite{Ha}).

\begin{theorem}
	\label{GW}
	Let $T>0$ and suppose that the function $y:[0,T]\to \mathbb R$ is absolutely continuous and such that
	$$
	\dfrac{\dd y}{\dd t}(t) \le a(t) y(t)+b(t), \textup{ a.e. in }(0,T),
	$$
	where $a, b \in L^1(0,T)$ and $a(t), b(t) \ge 0$ for a.a. $t\in (0,T)$.
	Then, it results
	$$
	y(t) \le \left[y(0)+\int_0^t b(s) \dd s\right]e^{\int_0^t a(s) \dd s}, \textup{ for all } t\in [0,T].
	$$
	\qed
\end{theorem}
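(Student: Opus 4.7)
The plan is to use the classical integrating factor trick. First I would introduce $A(t):=\int_0^t a(s)\dd s$, which is absolutely continuous on $[0,T]$ with $A'(t)=a(t)$ for almost every $t\in(0,T)$, and consider the auxiliary function $z(t):=y(t)\,e^{-A(t)}$. Since both $y$ and $e^{-A}$ are absolutely continuous and bounded on $[0,T]$, so is $z$, and the product rule for absolutely continuous functions applies almost everywhere.

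Next I would compute, for almost every $t\in(0,T)$,
\begin{equation*}
\dfrac{\dd z}{\dd t}(t)=e^{-A(t)}\left(\dfrac{\dd y}{\dd t}(t)-a(t)\,y(t)\right)\le e^{-A(t)}\,b(t),
\end{equation*}
where the inequality uses the differential inequality assumed on $y$. Since $z$ is absolutely continuous, it is the integral of its almost-everywhere derivative, so integrating from $0$ to $t$ yields
\begin{equation*}
y(t)\,e^{-A(t)}-y(0)=z(t)-z(0)\le \int_0^t e^{-A(s)}\,b(s)\dd s.
\end{equation*}

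Finally I would invoke the sign hypothesis $a\ge 0$, which gives $A(s)\ge 0$ and hence $e^{-A(s)}\le 1$ for all $s\in[0,T]$; combined with $b\ge 0$, this yields $\int_0^t e^{-A(s)}b(s)\dd s\le \int_0^t b(s)\dd s$. Rearranging and multiplying through by the positive quantity $e^{A(t)}$ produces the desired bound
\begin{equation*}
y(t)\le \left[y(0)+\int_0^t b(s)\dd s\right] e^{\int_0^t a(s)\dd s}\qquad\text{for all }t\in[0,T].
\end{equation*}

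The only delicate point is justifying the product rule for $z=y\,e^{-A}$ at the level of absolute continuity, which is standard (the product of two absolutely continuous functions on a compact interval is absolutely continuous, and its derivative equals the pointwise product rule almost everywhere); everything else reduces to a direct integration together with the sign hypotheses on $a$ and $b$. I do not expect any genuine obstacle beyond keeping track of these regularity requirements.
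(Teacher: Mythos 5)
Your proof is correct: the integrating-factor argument with $z=ye^{-A}$ is valid (the product and chain rules for absolutely continuous functions hold a.e., since $A$ is absolutely continuous with values in the bounded set $[0,\|a\|_{L^1(0,T)}]$), and the final estimate $e^{-A(s)}\le 1$ together with $b\ge 0$ gives exactly the stated bound, with no sign condition needed on $y(0)$ because one only multiplies the inequality by the positive factor $e^{A(t)}$. Note that the paper does not prove this statement at all — it is recalled as a classical result with references to Gronwall and to Hartman's book — so there is nothing to compare against; your argument is the standard one found in those sources and can stand as a self-contained justification.
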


Let us recall a compactness result proved by Simons (see, e.g., Corollary~4 of~\cite{Simons1987}), which will be used in what follows to recover the initial conditions. In what follows, the symbol "$\hookrightarrow$" denotes a \emph{continuous embedding}, whereas the symbol "$\hookrightarrow\hookrightarrow$" denotes a \emph{compact embedding}.

\begin{theorem}
	\label{Aubin}
	Let $T>0$ and let $X$, $Y$ and $Z$ be three Banach spaces such that
	$$
	X \hookrightarrow\hookrightarrow Y \hookrightarrow Z.
	$$
	
	Let $(f_n)_{n=1}^\infty$ be a bounded sequence in $L^\infty(0,T;X)$ and assume that the sequence of the weak derivatives in time $(\frac{\dd f_n}{\dd t})_{n=1}^\infty$ is bounded in $L^\infty(0,T;Z)$.
	Then, there exists a subsequence, still denoted $(f_n)_{n=1}^\infty$, that converges in the space $\mathcal{C}^0([0,T];Y)$.
	\qed
\end{theorem}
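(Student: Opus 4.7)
The plan is to prove this compactness result by combining an equicontinuity estimate in the weakest space $Z$ (coming from the derivative bound) with a pointwise relative compactness statement in $Y$ (coming from the bound in $X$ together with the compact embedding), and then upgrading convergence from $Z$ to $Y$ by an interpolation device of Ehrling type.

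First, I would use the derivative bound. Denoting $M := \sup_n \|\frac{\dd f_n}{\dd t}\|_{L^\infty(0,T;Z)} < \infty$, for any $s,t \in [0,T]$ we have, thanks to the fundamental theorem of calculus applied in $Z$,
$$
\|f_n(t) - f_n(s)\|_Z \;=\; \Bigl\|\int_s^t \frac{\dd f_n}{\dd \tau}(\tau)\dd \tau \Bigr\|_Z \;\le\; M\,|t-s|,
$$
so $(f_n)$ is uniformly Lipschitz (hence equicontinuous) as a family of $Z$-valued maps on $[0,T]$. Moreover, for each fixed $t\in [0,T]$, the set $\{f_n(t):n\ge 1\}$ is bounded in $X$ (using the $L^\infty(0,T;X)$ bound and, if necessary, replacing each $f_n$ by its continuous representative, which exists because of the Lipschitz estimate in $Z$), hence relatively compact in $Y$ and a fortiori in $Z$ by the embeddings $X\hookrightarrow\hookrightarrow Y\hookrightarrow Z$.

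Second, I would apply the Arzelà–Ascoli theorem in $\mathcal{C}^0([0,T];Z)$: uniform equicontinuity plus pointwise relative compactness in $Z$ yields a subsequence, still written $(f_n)$, converging in $\mathcal{C}^0([0,T];Z)$ to some limit $f$.

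Third, and this is the main obstacle, I would upgrade this convergence from $\mathcal{C}^0([0,T];Z)$ to $\mathcal{C}^0([0,T];Y)$. The key tool is Ehrling's lemma, which is the standard consequence of the compact embedding $X\hookrightarrow\hookrightarrow Y$ together with the continuous embedding $Y\hookrightarrow Z$: for every $\varepsilon>0$ there exists $C_\varepsilon>0$ such that
$$
\|u\|_Y \;\le\; \varepsilon\,\|u\|_X + C_\varepsilon\,\|u\|_Z \qquad \text{for all } u\in X.
$$
Applying this to $u=f_n(t)-f_m(t)$, taking the supremum over $t\in[0,T]$, and using the uniform bound $\sup_n\|f_n\|_{L^\infty(0,T;X)}\le R$, I obtain
$$
\sup_{t\in [0,T]}\|f_n(t)-f_m(t)\|_Y \;\le\; 2R\,\varepsilon + C_\varepsilon \sup_{t\in [0,T]}\|f_n(t)-f_m(t)\|_Z.
$$
Since $(f_n)$ is Cauchy in $\mathcal{C}^0([0,T];Z)$, the last term tends to $0$ as $n,m\to\infty$, and $\varepsilon$ is arbitrary; therefore $(f_n)$ is Cauchy in $\mathcal{C}^0([0,T];Y)$. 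The completeness of $Y$ (hence of $\mathcal{C}^0([0,T];Y)$ with the uniform norm) then yields convergence in $\mathcal{C}^0([0,T];Y)$, and the limit must coincide with $f$ by uniqueness of limits in the weaker space $Z$. The genuinely delicate point throughout is the interpolation step: without the compactness of $X\hookrightarrow Y$, Ehrling's inequality would fail and one could at best control the limit in $\mathcal{C}^0([0,T];Z)$.
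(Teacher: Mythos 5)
Your proof is correct, but it is not the paper's route: the paper does not prove this statement at all — it quotes it as a known result (Corollary~4 of Simon's compactness paper, cited as~\cite{Simons1987}) and closes it with a \qed. What you supply is the standard self-contained argument for this particular $L^\infty$/$L^\infty$ case: the derivative bound gives a uniform Lipschitz (hence equicontinuity) estimate in $Z$ via the vector-valued fundamental theorem of calculus, the $L^\infty(0,T;X)$ bound plus $X \hookrightarrow\hookrightarrow Y$ gives pointwise relative compactness, Arzel\`a--Ascoli gives convergence in $\mathcal{C}^0([0,T];Z)$, and Ehrling's inequality upgrades it to $\mathcal{C}^0([0,T];Y)$. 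This buys a proof that avoids invoking the full Simon machinery (which is designed for $L^p$-in-time compactness and general translation estimates), at the cost of having to be slightly careful about one point you gloss over: the continuous representative of $f_n$ is continuous with values in $Z$, and its values lie in $X$ with $\|f_n(t)\|_X \le R$ only for \emph{almost every} $t$, so Ehrling's inequality applied to $f_n(t)-f_m(t)$ and the bound $\|f_n(t)\|_X\le R$ are a priori available only a.e.\ in $t$. This is harmless: for every $t$ the value $f_n(t)$ lies in the $Y$-closure $K$ of the ball $B_X(0,R)$, which is compact in $Y$; on $K$ the topologies induced by $Y$ and $Z$ coincide (a continuous injection from a compact set into a Hausdorff space is a homeomorphism onto its image), so each $f_n$ is in fact $Y$-continuous and the a.e.\ estimate extends to all $t$, or equivalently one can run Arzel\`a--Ascoli directly in $\mathcal{C}^0([0,T];Y)$ with $Y$-equicontinuity obtained from Ehrling applied to the increments $f_n(t)-f_n(s)$. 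With that remark added, your argument is a complete and legitimate alternative to the paper's citation.
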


Let us also recall a result on vector-valued measures proved by Zinger in the paper~\cite{Singer1959} (see also, e.g., page~182 of~\cite{DieUhl1977}, and page~380 of~\cite{Dinculeanu1965}).

\begin{theorem}
	\label{ds}
	Let $\omega$ be a compact Hausdorff space and let $X$ be a Banach space satisfying the Radon-Nikodym property. Let $\mathcal{F}$ be the collection of Borel sets of $\omega$.
	
	There exists an isomorphism between $(\mathcal{C}^0(\omega;X))^\ast$ and the space of the regular Borel measures with finite variation taking values in $X^\ast$. In particular, for each $F \in (\mathcal{C}^0(\omega;X))^\ast$, there exists a unique regular Borel measure $\mu:\mathcal{F} \to X^\ast$ in $\mathcal{M}(\omega;X^\ast)$ with finite variation such that
	$$
	\langle\langle \alpha,F\rangle\rangle_{X} =\int_\omega {\phantom{,}}_{X^\ast}\langle \dd \mu,\alpha\rangle_{X},
	$$
	for all $\alpha \in \mathcal{C}^0(\omega;X)$.
	\qed
\end{theorem}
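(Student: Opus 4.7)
The statement to prove is the vector-valued Riesz representation theorem attributed to Singer, and the plan is to reduce it to the scalar Riesz representation theorem by \emph{slicing} along vectors of $X$, then to reassemble the resulting family of scalar measures into a single $X^\ast$-valued measure. Fix $F \in (\mathcal{C}^0(\omega;X))^\ast$. For each $x \in X$, the map $\varphi \mapsto F(\varphi \cdot x)$ sends $\mathcal{C}^0(\omega)$ into $\mathbb{R}$, is linear, and satisfies $|F(\varphi x)| \le \|F\|\,\|x\|_X \,\|\varphi\|_\infty$. Hence, the scalar Riesz representation theorem supplies a unique regular Borel measure $\mu_x$ of finite variation on $\omega$ with
$$
F(\varphi \cdot x) = \int_\omega \varphi \dd \mu_x, \qquad \varphi \in \mathcal{C}^0(\omega),
$$
and with $|\mu_x|(\omega) \le \|F\|\,\|x\|_X$. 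Uniqueness ensures that $x \mapsto \mu_x$ is linear, so for every Borel set $A \in \mathcal{F}$ the map $\mu(A):x \mapsto \mu_x(A)$ is linear on $X$; its boundedness follows at once from $|\mu_x(A)| \le |\mu_x|(\omega) \le \|F\|\,\|x\|_X$, giving $\mu(A) \in X^\ast$ with $\|\mu(A)\|_{X^\ast} \le \|F\|$.

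The next step is to promote $\mu$ from a finitely additive $X^\ast$-valued set function to a countably additive regular Borel measure of finite variation. Finite additivity transfers directly from each scalar $\mu_x$. Weak$^\ast$ countable additivity (\emph{i.e.} $\mu(\bigcup_n A_n)\,x = \sum_n \mu(A_n)\,x$ for every $x$) is immediate from the scalar countable additivity of $\mu_x$. To pass from weak$^\ast$ to strong countable additivity and to prove that $\mu$ has finite total variation $|\mu|(\omega) = \|F\|$, I would invoke the Radon--Nikodym property of $X$: because $X$ has the RNP, every weak$^\ast$ countably additive $X^\ast$-valued set function of bounded weak$^\ast$ semivariation that is a dual of an $L^1$-type functional is automatically of bounded variation and norm countably additive (this is exactly where the RNP assumption cited from Diestel--Uhl does its work). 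Regularity of $\mu$ is inherited set-wise from the regularity of each scalar $\mu_x$ combined with the uniform bound $\|\mu(\cdot)\|_{X^\ast} \le \|F\|$.

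Having built $\mu \in \mathcal{M}(\omega;X^\ast)$, I would verify the representation formula $\langle\langle \alpha,F\rangle\rangle_X = \int_\omega {\phantom{,}}_{X^\ast}\langle \dd\mu,\alpha\rangle_X$ first on simple $X$-valued functions $\alpha = \sum_{k=1}^N \mathbf{1}_{A_k} x_k$, for which both sides equal $\sum_{k=1}^N \langle \mu(A_k),x_k\rangle = \sum_{k=1}^N \mu_{x_k}(A_k)$ by construction. I would then extend to general $\alpha \in \mathcal{C}^0(\omega;X)$ by uniform approximation: since $\omega$ is compact and $\alpha$ is uniformly continuous with totally bounded range, $\alpha$ is the uniform limit of such simple functions, and both the functional $F$ (by continuity) and the Bochner-type integral against $\mu$ (by $|\mu|(\omega) < \infty$) pass to the limit. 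Conversely, every $\mu \in \mathcal{M}(\omega;X^\ast)$ defines by the same integral formula an element of $(\mathcal{C}^0(\omega;X))^\ast$ of norm at most $|\mu|(\omega)$, and the two constructions are mutually inverse, yielding the claimed isometric isomorphism.

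The main obstacle is the middle step, namely upgrading the weak$^\ast$ additivity and finite semivariation produced by the slicing construction to genuine strong countable additivity and finite \emph{total} variation of $\mu$; without this the duality pairing $\int_\omega \langle \dd\mu,\alpha\rangle$ would not even be well defined for all continuous $\alpha$. It is precisely at this point that the Radon--Nikodym hypothesis on $X$ is indispensable, and I would cite the relevant structure theorem from Diestel--Uhl rather than re-prove it from scratch.
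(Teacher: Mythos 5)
The paper does not prove this statement at all: Theorem~\ref{ds} is recalled as a classical result (the Singer/Dinculeanu representation theorem) with citations to \cite{Singer1959}, \cite{DieUhl1977} and \cite{Dinculeanu1965}, so the only meaningful comparison is between your sketch and the classical proof. Your opening move (slicing $F$ along vectors $x\in X$, applying the scalar Riesz theorem to get $\mu_x$, and defining $\mu(A)x:=\mu_x(A)$, so that $\mu(A)\in X^\ast$ with $\|\mu(A)\|\le\|F\|$) is indeed how the classical argument begins. The genuine gap is your middle step. The assertion that the Radon--Nikodym property of $X$ upgrades a weak$^\ast$ countably additive $X^\ast$-valued set function of bounded semivariation to one of bounded variation is not a theorem: take $X=\ell^2$ (which has the RNP) and $\mu(A)=\bigl(\int_A r_n\,\dd t\bigr)_n$ on the Borel sets of $[0,1]$, with $r_n$ the Rademacher functions; this is weakly countably additive with semivariation at most $1$ but has infinite variation. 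What actually forces finite variation in the Dinculeanu--Singer theorem is the boundedness of $F$ on $\mathcal{C}^0(\omega;X)$ itself: given disjoint Borel sets $A_1,\dots,A_n$ and nearly norming unit vectors $x_1,\dots,x_n$, one uses regularity of the scalar measures $|\mu_{x_i}|$ to replace the $A_i$ by disjoint compact sets, separates them by disjoint open sets, inserts Urysohn functions $\varphi_i$, and tests $F$ against $\sum_i \pm\varphi_i x_i$, which has sup-norm at most $1$; this yields $\sum_i\|\mu(A_i)\|_{X^\ast}\le\|F\|$ directly, and countable additivity then follows from regularity (Alexandroff), with no use of the RNP. (Indeed the representation theorem holds for arbitrary Banach spaces $X$; the RNP hypothesis in the paper's statement is not what carries this step.)

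A second, smaller but real, flaw is your verification of the duality formula ``first on simple functions $\alpha=\sum_k \mathbf{1}_{A_k}x_k$'': $F$ is a functional on $\mathcal{C}^0(\omega;X)$ only, so $F(\alpha)$ is undefined for such discontinuous $\alpha$, and the claim that ``both sides equal $\sum_k\mu_{x_k}(A_k)$ by construction'' has no meaning on the left-hand side unless you first extend $F$ to bounded Borel functions (which requires an additional argument). The standard remedy is to approximate a given continuous $\alpha$ by \emph{continuous} combinations $\sum_i\varphi_i(\cdot)\,\alpha(t_i)$ built from a partition of unity subordinate to a fine open cover of $\omega$, for which $F$ is defined and the integral against $\mu$ is computable, and then pass to the uniform limit using $|\mu|(\omega)<\infty$. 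With these two repairs --- finite variation from the disjoint-support Urysohn argument rather than an RNP structure theorem, and partition-of-unity approximants rather than characteristic-function simple functions --- your outline becomes the classical proof; as written, both the central step and the verification step would fail.
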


We are now ready to formulate the penalized variational formulation of the model under consideration. In what follows, the number $\kappa>0$ denotes the \emph{penalty parameter}. For sake of brevity, define the matrix
$$
\Upsilon:=\dfrac{k_s}{2}\Sigma^T\Sigma +\dfrac{k_b C^2}{2}\Theta^T\Theta,
$$
where, the matrices $\Sigma$ and $\Theta$, and the positive constants $k_s$, $k_b$ and $C$ are those defined in section~\ref{Sec:2}.

\begin{customprob}{$\mathcal{P}_\kappa$}
\label{penalty}
Given $\bm{F}=(\vec{f}_i)_{i=1}^{11} \in L^2(0,T;\mathbb{R}^{33})$, find $\bm{U}_\kappa=(\vec{u}_{i,\kappa})_{i=1}^{11}:[0,T] \to \mathbb{R}^{33}$ such that
\begin{align*}
\bm{U}_\kappa &\in L^\infty(0,T;\mathbb{R}^{33}),\\
\dfrac{\dd \bm{U}_\kappa}{\dd t} &\in L^\infty(0,T;\mathbb{R}^{33}),\\
\dfrac{\dd^2 \bm{U}_\kappa}{\dd t^2} &\in L^\infty(0,T;\mathbb{R}^{33}),
\end{align*}
that satisfies the following equations
$$
\dfrac{\dd^2 \bm{U}_\kappa}{\dd t^2} +\Upsilon \bm{U}_\kappa +\dfrac{1}{\kappa} \bm{N}(\bm{U}_\kappa) =\bm{F},
$$
in the sense of distributions in $(0,T)$, and satisfying the following initial conditions
\begin{align*}
\bm{U}_\kappa(0)&=\bm{U}_0 \in K,\\
\dfrac{\dd \bm{U}_\kappa}{\dd t}(0)&=\bm{U}_1 \in \mathbb{R}^{33},
\end{align*}
for a prescribed element $\bm{U}_0 \in K$ such that 
\begin{equation*}
(\overrightarrow{O P_i} + \vec{u}_{i,0}) \cdot \vec{e}_3 >0,\quad\textup{ for all }1 \le i \le 11,
\end{equation*}
and a prescribed element $\bm{U}_1 \in \mathbb{R}^{33}$.
\bqed
\end{customprob}

We first establish a series of preliminary results and we will then let the penalty parameter $\kappa$ approach zero. We will show that, by so doing, it is possible to recover a \emph{limit model}, which is the proposed model governing the deformation of a viral capsid subjected not to cross a given rigid surface in the time-dependent case.

\begin{lemma}
\label{lemma1}
For each $\kappa>0$, Problem~\ref{penalty} admits a unique solution $\bm{U}_\kappa \in W^{2,1}(0,T;\mathbb{R}^{33})$.
\end{lemma}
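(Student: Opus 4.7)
The plan is to reduce the second-order ODE in Problem~\ref{penalty} to a first-order Cauchy problem in $\mathbb{R}^{66}$ and then to invoke the Carath\'eodory--Picard--Lindel\"of theorem (equivalently, a direct Banach fixed-point argument) to produce a unique absolutely continuous solution. First I would set $\bm{V}_\kappa:=\frac{\dd\bm{U}_\kappa}{\dd t}$ and $\bm{W}_\kappa:=(\bm{U}_\kappa,\bm{V}_\kappa)\in\mathbb{R}^{66}$, so that the second-order equation of Problem~\ref{penalty} becomes the first-order system
$$
\frac{\dd \bm{W}_\kappa}{\dd t}=G(t,\bm{W}_\kappa):=\begin{pmatrix} \bm{V}_\kappa\\[2pt] -\Upsilon \bm{U}_\kappa -\dfrac{1}{\kappa}\,N_0(\bm{U}_\kappa)+\bm{F}(t)\end{pmatrix},\qquad \bm{W}_\kappa(0)=(\bm{U}_0,\bm{U}_1),
$$
where $N_0$ is a pointwise map on $\mathbb{R}^{33}$ that is still to be identified.

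Before applying a fixed-point argument, I would verify that the nonlinearity $\bm{N}$ defined above Corollary~\ref{N} is in fact a Nemytskii-type operator induced by a pointwise Lipschitz map on $\mathbb{R}^{33}$. Unpacking the definitions, $\gamma(\vec{v})(t)=\vec{v}(t)\cdot\vec{e}_3$ is pointwise, a short duality computation gives $\gamma^{\ast}(f)(t)=f(t)\,\vec{e}_3$, and $\beta_i$ is pointwise by construction. Hence $\bm{N}(\bm{V})(t)=N_0(\bm{V}(t))$, where $N_0:\mathbb{R}^{33}\to\mathbb{R}^{33}$ is defined componentwise by
$$
(N_0(\bm{V}))_i:=-\{\overrightarrow{OP_i}\cdot\vec{e}_3+\vec{v}_i\cdot\vec{e}_3\}^{-}\,\vec{e}_3,\qquad 1\le i\le 11.
$$
The very estimate that underlies Theorem~\ref{betai} already shows, pointwise, that $N_0$ is globally $1$-Lipschitz on $\mathbb{R}^{33}$, with a constant independent of $\kappa$. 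Consequently $G(t,\cdot)$ is Lipschitz on $\mathbb{R}^{66}$ with constant $L_\kappa=C(\|\Upsilon\|+\kappa^{-1})$, uniformly in $t$, while $G(\cdot,\bm{W})$ is measurable and is majorised by the $L^2\hookrightarrow L^1$ function $L_\kappa(1+|\bm{W}|)+|\bm{F}(\cdot)|$.

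With $G$ in Carath\'eodory form and Lipschitz in the spatial variable, the Banach fixed-point theorem applied to the integral reformulation $\bm{W}_\kappa(t)=\bm{W}_\kappa(0)+\int_0^t G(s,\bm{W}_\kappa(s))\,\dd s$ on $\mathcal{C}^0([0,T];\mathbb{R}^{66})$ endowed with the weighted norm $\|\bm{W}\|_\lambda:=\sup_t e^{-\lambda t}|\bm{W}(t)|$ (choosing $\lambda>L_\kappa$) yields a unique globally defined $\bm{W}_\kappa$ that is absolutely continuous on $[0,T]$; uniqueness can also be read off from Gr\"onwall's inequality (Theorem~\ref{GW}) applied to $|\bm{W}_\kappa-\widetilde{\bm{W}}_\kappa|^2$. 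Reading off the components gives $\bm{U}_\kappa,\,\frac{\dd\bm{U}_\kappa}{\dd t}\in\mathcal{C}^0([0,T];\mathbb{R}^{33})$, and feeding $\bm{U}_\kappa$ back into the equation forces $\frac{\dd^2\bm{U}_\kappa}{\dd t^2}=-\Upsilon\bm{U}_\kappa-\kappa^{-1}N_0(\bm{U}_\kappa)+\bm{F}\in L^2(0,T;\mathbb{R}^{33})\hookrightarrow L^1(0,T;\mathbb{R}^{33})$, so that $\bm{U}_\kappa\in W^{2,1}(0,T;\mathbb{R}^{33})$, as claimed. The only step that really deserves care is the identification of $\bm{N}$ as a pointwise Lipschitz operator on the finite-dimensional phase space: once this is done, the remainder is a routine application of finite-dimensional Carath\'eodory ODE theory.
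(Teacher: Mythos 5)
Your proposal is correct, and it rests on the same basic reduction as the paper: both arguments rewrite the penalized second-order equation of Problem~\ref{penalty} as a first-order Cauchy problem in $\mathbb{R}^{66}$ with a Carath\'eodory right-hand side. The difference lies in how existence and uniqueness for that system are then obtained. The paper cites the local existence-and-uniqueness theorem for Carath\'eodory systems (Theorem~1.44 of~\cite{Roubicek2005}), taking continuity of the nonlinearity from Corollary~\ref{N}, and then extends the local solution to all of $[0,T]$ by the linear-growth domination together with the continuation result (Theorem~1.45 of~\cite{Roubicek2005}). You instead make explicit that $\bm{N}$ is a Nemytskii operator: computing $\gamma^\ast(f)(t)=f(t)\,\vec{e}_3$ and exhibiting the pointwise map $N_0$, globally $1$-Lipschitz on $\mathbb{R}^{33}$ (the pointwise content of Theorem~\ref{betai}), which allows a one-shot global contraction in a Bielecki-weighted norm, with uniqueness alternatively via Gronwall's inequality (Theorem~\ref{GW}). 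Each route buys something: the paper's is shorter because it delegates to quoted ODE theorems, but it leaves implicit the pointwise Lipschitz structure that actually underwrites uniqueness there (Corollary~\ref{N} concerns the operator on $L^2(0,T;\mathbb{R}^{33})$, not the pointwise map fed into the Carath\'eodory theorem), whereas your identification of $N_0$ supplies exactly that missing detail, gives global solvability without a separate continuation step, and your final bootstrap $\frac{\dd^2\bm{U}_\kappa}{\dd t^2}=-\Upsilon\bm{U}_\kappa-\kappa^{-1}N_0(\bm{U}_\kappa)+\bm{F}\in L^2(0,T;\mathbb{R}^{33})\hookrightarrow L^1(0,T;\mathbb{R}^{33})$ recovers precisely the $W^{2,1}(0,T;\mathbb{R}^{33})$ regularity asserted in Lemma~\ref{lemma1}.
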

\begin{proof}
Let us put
\begin{align*}
\bm{X}_\kappa^{1}&:=\bm{U}_\kappa,\\
\bm{X}_\kappa^{2}&:=\dfrac{\dd \bm{U}_\kappa}{\dd t}=\dfrac{\dd \bm{X}_\kappa^{1}}{\dd t},
\end{align*}
so that the initial value problem in Problem~\eqref{penalty} can be written, for a.a. $t \in (0,T)$, in the form of a system of ordinary differential equations of the first order:
\begin{equation}
\label{system}
\begin{cases}
\dfrac{\dd \bm{X}_\kappa^{1}}{\dd t}&= \bm{X}_\kappa^{2},\\
\\
\dfrac{\dd \bm{X}_\kappa^{2}}{\dd t}&=\bm{F}-\Upsilon \bm{X}_\kappa^{1} -\dfrac{1}{\kappa} \bm{N}(\bm{X}_\kappa^{1}),\\
\\
\bm{X}_\kappa^{1}(0)&=\bm{U}_0,\\
\\
\dfrac{\dd \bm{X}_\kappa^{1}}{\dd t}(0)&=\bm{U}_1.
\end{cases}
\end{equation}

The mapping
\begin{equation}
\label{map}
\begin{aligned}
&(t,\bm{X}^{(1)}=(\vec{x}_i^{(1)})_{i=1}^{11},\bm{X}^{(2)}=(\vec{x}_i^{(2)})_{i=1}^{11}) \in (0,T) \times \mathbb{R}^{33} \times \mathbb{R}^{33} \\
&\quad\mapsto 
\begin{pmatrix}
\bm{X}^{(2)}\\
\bm{F}(t)-\Upsilon \bm{X}^{(1)} -\dfrac{1}{\kappa} \left(-\{(\overrightarrow{OP_i}+\vec{x}_i^{(1)})\cdot\vec{e}_3\}^{-}\right)_{i=1}^{11}
\end{pmatrix}
\end{aligned}
\end{equation}
is measurable with respect to $t$ and, thanks to Corollary~\ref{N}, is continuous with respect to $\bm{X}^{(1)}$ and $\bm{X}^{(2)}$. An application of the local-in-time existence and uniqueness theorem for systems of ordinary differential equations whose datum is a Carath\'eodory function (cf., e.g., Theorem~1.44 of~ \cite{Roubicek2005}) ensures the existence of a number $0 < \tau \le T$ such that the system~\eqref{system} admits a unique solution $(\bm{X}_\kappa^{1},\bm{X}_\kappa^{2}) \in W^{1,1}(0,\tau;\mathbb{R}^{33}) \times W^{1,1}(0,\tau;\mathbb{R}^{33})$.

We now compute the maximal interval for the locally unique solution. The idea consists in applying the weak version of the Cauchy-Lipschitz theorem for systems of first order ordinary differential equations where the datum appears in the form of a Carath\'eodory function (cf., e.g., Theorem~1.45 of~\cite{Roubicek2005}).

To this aim, observe that for each $\kappa>0$ the mapping~\eqref{map} is such that the norm of its right hand side is dominated by
$$
|\bm{X}^{(2)}| +\|\bm{F}\|_{L^2(0,T;\mathbb{R}^{33})} +\lambda_{\textup{max}} |\bm{X}^{(1)}| +\dfrac{1}{\kappa} |\bm{X}^{(1)}|+\dfrac{\max_{1 \le i \le 11} \{\overrightarrow{OP_i}\cdot\vec{e}_3\}}{\kappa},
$$
where $\lambda_{\textup{max}}>0$ is the maximal eigenvalue of the symmetric positive-definite matrix $\Upsilon$.

The aforementioned domination allows us to apply the weak version of the Cauchy-Lipschitz theorem for ordinary differential equations recalled beforehand, so as to infer that the maximal interval is the whole interval $[0,T]$ and, thus, that the unique solution of~\eqref{system} is defined over the whole interval $[0,T]$ up to a zero measure subset. This completes the proof.
\end{proof}

\begin{lemma}
\label{lemma2}
There exists a constant $c_0>0$ independent of $\kappa$ for which
\begin{align*}
\|\bm{U}_\kappa\|_{L^\infty(0,T;\mathbb{R}^{33})} &\le c_0, \quad \textup{ for all }\kappa>0,\\
\left\|\dfrac{\dd \bm{U}_\kappa}{\dd t}\right\|_{L^\infty(0,T;\mathbb{R}^{33})} &\le c_0, \quad \textup{ for all }\kappa>0,\\
\dfrac{1}{\kappa}\|\bm{N}(\bm{U}_\kappa)\|_{L^1(0,T;\mathbb{R}^{33})} &\le c_0, \quad \textup{ for all }\kappa>0,\\
\left\|\dfrac{\dd^2 \bm{U}_\kappa}{\dd t^2}\right\|_{L^1(0,T;\mathbb{R}^{33})} &\le c_0, \quad \textup{ for all }\kappa>0.
\end{align*}
\end{lemma}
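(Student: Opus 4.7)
The plan is to derive all four bounds by exploiting a crucial structural feature of the penalty operator: because $\gamma^\ast$ acts pointwise in $t$ as $\gamma^\ast f=f\vec{e}_3$ and each $\beta_i$ is pointwise in $t$, the operator $\bm{N}$ is really a pointwise-in-time nonlinearity whose $i$-th block is $-\{h_i(t)\}^{-}\vec{e}_3$, with $h_i(t):=\overrightarrow{OP_i}\cdot\vec{e}_3+u_{i,\kappa,3}(t)$. This, together with the fact that $\bm{U}_\kappa\in\mathcal{C}^1([0,T];\mathbb{R}^{33})$ by Lemma~\ref{lemma1}, enables classical ODE energy manipulations.

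For the first two bounds I would take the pointwise scalar product of the equation in Problem~\ref{penalty} with $\dfrac{\dd\bm{U}_\kappa}{\dd t}$ and integrate on $[0,t]$. The chain rule yields
\begin{equation*}
\bm{N}(\bm{U}_\kappa)\cdot\dfrac{\dd\bm{U}_\kappa}{\dd t}=\dfrac{\dd}{\dd t}\left(\dfrac{1}{2}\sum_{i=1}^{11}|\{h_i\}^{-}|^2\right)\quad\textup{a.e. in }(0,T),
\end{equation*}
and the symmetry of $\Upsilon$ produces an energy identity in which the only penalty contribution is the \emph{nonnegative} quantity $\frac{1}{2\kappa}\sum_i|\{h_i(t)\}^{-}|^2$, whose initial value vanishes because $\bm{U}_0\in K$. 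Dominating the right-hand side by Young's inequality (recall $\bm{F}\in L^2$) and then applying Gronwall's inequality (Theorem~\ref{GW}) to the resulting integral inequality, combined with the strict positive-definiteness of $\Upsilon$ inherited from $\Sigma^T\Sigma$ as established in Section~\ref{Sec:2}, yields the first two bounds uniformly in $\kappa$.

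The main obstacle is the third bound, because the energy identity only provides $\|\{h_i(\cdot)\}^{-}\|_{L^\infty(0,T)}=O(\sqrt{\kappa})$, which would give $\frac{1}{\kappa}\|\bm{N}(\bm{U}_\kappa)\|_{L^1}=O(1/\sqrt{\kappa})$, diverging as $\kappa\to 0$. To circumvent this I would exploit the signed character of $\bm{N}$ by taking the scalar product of the ODE with the \emph{constant} vector $\bm{E}:=(\vec{e}_3,\dots,\vec{e}_3)\in\mathbb{R}^{33}$ and integrating in time: since $-\bm{N}(\bm{U}_\kappa)\cdot\bm{E}=\sum_i\{h_i\}^{-}\ge 0$ dominates the Euclidean norm $|\bm{N}(\bm{U}_\kappa)|_{\mathbb{R}^{33}}=\sqrt{\sum_i|\{h_i\}^{-}|^2}$, the identity $\frac{1}{\kappa}\bm{N}(\bm{U}_\kappa)=\bm{F}-\frac{\dd^2\bm{U}_\kappa}{\dd t^2}-\Upsilon\bm{U}_\kappa$ gives
\begin{equation*}
\dfrac{1}{\kappa}\|\bm{N}(\bm{U}_\kappa)\|_{L^1(0,T;\mathbb{R}^{33})}\le \left[\dfrac{\dd\bm{U}_\kappa}{\dd t}\cdot\bm{E}\right]_0^T+\int_0^T\Upsilon\bm{U}_\kappa\cdot\bm{E}\,\dd t-\int_0^T\bm{F}\cdot\bm{E}\,\dd t,
\end{equation*}
and every term on the right is controlled by the first two bounds, the data $\bm{U}_0$, $\bm{U}_1$, and $\|\bm{F}\|_{L^2(0,T;\mathbb{R}^{33})}$ (using the embedding $L^2(0,T)\hookrightarrow L^1(0,T)$ for the forcing). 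Finally, the fourth bound is immediate by reading the ODE as $\frac{\dd^2\bm{U}_\kappa}{\dd t^2}=\bm{F}-\Upsilon\bm{U}_\kappa-\frac{1}{\kappa}\bm{N}(\bm{U}_\kappa)$ and combining the three bounds already established with the triangle inequality in $L^1(0,T;\mathbb{R}^{33})$.
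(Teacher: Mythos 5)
Your proposal is correct, and for three of the four estimates it coincides with the paper's own argument: the first two bounds come from testing the penalized equation with $\frac{\dd\bm{U}_\kappa}{\dd t}$, using Stampacchia's rule to write $\bm{N}(\bm{U}_\kappa)\cdot\frac{\dd\bm{U}_\kappa}{\dd t}$ as the time derivative of the nonnegative quantity $\frac12\sum_i|\{\overrightarrow{OP_i}\cdot\vec e_3+u_{i,\kappa,3}\}^{-}|^2$ (which vanishes at $t=0$ since $\bm{U}_0\in K$), then Young plus Gronwall; and the fourth bound is read off from the equation by the triangle inequality, exactly as in the paper. The only genuine difference is the third bound, which is the delicate one, and there your route is a variant of the paper's: the paper integrates the vector equation over an arbitrary interval $(t_1,t_2)$, bounds $\frac1\kappa\bigl|\int_{t_1}^{t_2}\bm{N}(\bm{U}_\kappa)\,\dd t\bigr|$ by the already established estimates, and then converts this into a bound on $\frac1\kappa\int|\bm{N}(\bm{U}_\kappa)|\,\dd t$ through a supremum-over-unit-test-vectors manipulation that exchanges the supremum with the time integral, justified by the fact that every component of $\bm{N}$ along $\vec e_3$ has a fixed sign; you instead pair the equation once with the fixed constant vector $\bm{E}=(\vec e_3,\dots,\vec e_3)$, integrate in time, and use the elementary pointwise inequality $|\bm{N}(\bm{U}_\kappa)|\le\sum_i\{h_i\}^{-}=-\bm{N}(\bm{U}_\kappa)\cdot\bm{E}$. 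Both arguments exploit precisely the same structural fact (one-signedness of the penalty in the $\vec e_3$ direction), but your version is slightly cleaner: it avoids the sup--integral exchange, which as written in the paper is really only an inequality up to a dimensional constant rather than the stated equality, whereas your pointwise domination by $-\bm{N}\cdot\bm{E}$ gives the $L^1$ control directly, at the harmless cost of the constant hidden in $\sum_i a_i\ge\sqrt{\sum_i a_i^2}$. Your preliminary remark that the energy estimate alone only yields $\frac1\kappa\|\bm{N}(\bm{U}_\kappa)\|_{L^1}=O(\kappa^{-1/2})$ correctly identifies why this separate argument is needed.
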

\begin{proof}
Fix $t \in (0,T)$, and scalarly multiply the equations in Problem~\ref{penalty} by $\bm{V}=\frac{\dd \bm{U}_\kappa}{\dd t}(\tau)$, where $0<\tau<t$. We obtain that
\begin{equation}
	\label{int}
\begin{aligned}
&\dfrac{1}{2}\dfrac{\dd}{\dd t}\left|\dfrac{\dd \bm{U}_\kappa}{\dd t}(\tau)\right|^2+\dfrac{1}{2} \dfrac{\dd}{\dd t}\left(\bm{U}_\kappa(\tau)^T \Upsilon \bm{U}_\kappa(\tau)\right)\\
&\quad+\dfrac{1}{\kappa} \bm{N}(\bm{U}_\kappa)(\tau) \cdot \dfrac{\dd \bm{U}_\kappa}{\dd t}(\tau)=\bm{F}(\tau) \cdot \dfrac{\dd \bm{U}_\kappa}{\dd t}(\tau).
\end{aligned}
\end{equation}

Let us recall that, by Stampacchia's theorem (cf., e.g., \cite{Stampacchia1965}), we have that if $f \in W^{1,p}(0,T)$ then its negative part $f^{-} \in W^{1,p}(0,T)$ and
$$
\dfrac{\dd}{\dd t} f^{-}=
\begin{cases}
-\dfrac{\dd f}{\dd t}&, \textup{ if } f<0,\\
0&, \textup{ if }f \ge 0.
\end{cases}
$$

Thanks to this result, we obtain that
\begin{equation*}
\begin{aligned}
&\sum_{i=1}^{11} \int_{0}^{t}  (\beta_i\gamma)(\vec{u}_{i,\kappa})(\tau)\dfrac{\dd}{\dd \tau}(\gamma(\vec{u})_{i,\kappa})(\tau)\dd \tau\\
&=\dfrac{1}{2}\sum_{i=1}^{11} \int_{0}^{t} \dfrac{\dd}{\dd \tau}\left(|(\beta_i \gamma)(\vec{u}_{i,\kappa})(\tau)|^2\right)\dd \tau\\
&=\dfrac{1}{2}\sum_{i=1}^{11}|(\beta_i \gamma)(\vec{u}_{i,\kappa})(t)|^2 \ge 0,
\end{aligned}
\end{equation*}
where $(\beta_i \gamma)(\vec{u}_{i,0})=0$ for all $1 \le i \le 11$, since $\bm{U}_0=(\vec{u}_{0,i})_{i=1}^{11} \in K$.

Keeping the latter in mind, letting $\lambda_{\textup{min}}>0$ denote the minimal eigenvalue of the symmetric positive-definite matrix $(\Sigma^T \Sigma + \Theta^T \Theta)$, and applying Young's inequality~\cite{Young1912} gives:
\begin{equation}
	\label{est}
\begin{aligned}
&\dfrac{1}{2}\dfrac{\dd}{\dd t}\left\{\left|\dfrac{\dd \bm{U}_\kappa}{\dd t}(\tau)\right|^2+\left(\bm{U}_\kappa(\tau)^T \Upsilon \bm{U}_\kappa(\tau)\right)\right\}\\
&\le\dfrac{1}{2}\dfrac{\dd}{\dd t}\left\{\left|\dfrac{\dd \bm{U}_\kappa}{\dd t}(\tau)\right|^2+\left(\bm{U}_\kappa(\tau)^T \Upsilon \bm{U}_\kappa(\tau)\right)\right\}\\
&\quad+\dfrac{1}{2\kappa}\sum_{i=1}^{11}|(\beta_i \gamma)(\vec{u}_{i,\kappa})(\tau)|^2
\le \dfrac{|\bm{F}(\tau)|^2}{2}+\dfrac{1}{2}\left|\dfrac{\dd \bm{U}_\kappa}{\dd \tau}(\tau)\right|^2\\
&\le \dfrac{|\bm{F}(\tau)|^2}{2}+\dfrac{1}{2}\left|\dfrac{\dd \bm{U}_\kappa}{\dd \tau}(\tau)\right|^2+\dfrac{1}{2}\left(\bm{U}_\kappa(\tau)^T \Upsilon \bm{U}_\kappa(\tau)\right).
\end{aligned}
\end{equation}

Letting
\begin{align*}
y(\tau)&:=\dfrac{\dd}{\dd t}\left\{\dfrac{1}{2}\left|\dfrac{\dd \bm{U}_\kappa}{\dd t}(\tau)\right|^2+\dfrac{1}{2}\left(\bm{U}_\kappa(\tau)^T \Upsilon \bm{U}_\kappa(\tau)\right)\right\},\\
a(\tau)&:=1,\\
b(\tau)&:=\dfrac{1}{2}|\bm{F}(\tau)|^2,
\end{align*}
an application of the Gronwall's inequality (Theorem~\ref{GW}) to~\eqref{est} gives:
\begin{equation}
\label{bound}
\begin{aligned}
&\{\bm{U}_\kappa\}_{\kappa>0} \textup{ is bounded in }L^\infty(0,T;\mathbb{R}^{33}),\\
&\left\{\dfrac{\dd \bm{U}_\kappa}{\dd t}\right\}_{\kappa>0} \textup{ is bounded in }L^\infty(0,T;\mathbb{R}^{33}).
\end{aligned}
\end{equation}

As a result of~\eqref{bound}, we have that
\begin{equation*}
\dfrac{1}{2\kappa}\sum_{i=1}^{11}|(\beta_i \gamma)(\vec{u}_{i,\kappa})(t)|^2 \le C, \quad\textup{ for a.a. }t\in(0,T),
\end{equation*}
for some $C=C(\bm{F},T)>0$ so as to infer that:
\begin{equation*}
\{(\overrightarrow{OP_i}+\vec{u}_{i,\kappa})\cdot\vec{e}_3\}^{-} \to 0 \textup{ as }\kappa \to 0 \textup{ for all }1\le i \le 11.
\end{equation*}

Let us integrate in $(t_1,t_2) \subset (0,T)$ the equation in Problem~\ref{penalty} and let us pass to the absolute value; we obtain that
\begin{equation}
\label{bound1}
\begin{aligned}
&\dfrac{1}{\kappa} \left|\int_{t_1}^{t_2}\bm{N}(\bm{U}_\kappa) \dd t\right| =\left|\int_{t_1}^{t_2}\bm{F} \dd t-\int_{t_1}^{t_2} \Upsilon \bm{U}_\kappa \dd t-\int_{t_1}^{t_2} \dfrac{\dd^2 \bm{U}_\kappa}{\dd t^2} \dd t\right|\\
&\le\int_{t_1}^{t_2}|\bm{F}| \dd t+\lambda_{\textup{max}}\int_{t_1}^{t_2} |\bm{U}_\kappa| \dd t+\left|\dfrac{\dd \bm{U}_\kappa}{\dd t}(t_2)-\dfrac{\dd \bm{U}_\kappa}{\dd t}(t_1)\right|,
\end{aligned}
\end{equation}
where the evaluation of the first derivatives at $t_2$ and $t_1$ in the last term makes sense since for continuous functions the essential supremum coincides with the supremum, and since we have shown in Lemma~\ref{lemma1} that $\bm{U}_\kappa \in W^{2,1}(0,T;\mathbb{R}^{33})$. The boundednesses established in~\eqref{bound} thus give that there exists a constant $c_0>0$ independent of $\kappa$ such that:
$$
\dfrac{1}{\kappa} \left|\int_{t_1}^{t_2}\bm{N}(\bm{U}_\kappa) \dd t\right| \le c_0, \quad\textup{ for all }\kappa >0.
$$

Let us now observe that we can write
\begin{align*}
&\dfrac{1}{\kappa} \left|\int_{t_1}^{t_2}\bm{N}(\bm{U}_\kappa) \dd t\right| = \dfrac{1}{\kappa}\sup_{\substack{\bm{V}=(\vec{v}_i)_{i=1}^{11}\\|\bm{V}|=1}} \left|\left(\int_{t_1}^{t_2}\bm{N}(\bm{U}_\kappa) \dd t\right) \cdot \bm{V}\right|\\
&=\dfrac{1}{\kappa}\sup_{\substack{\bm{V}=(\vec{v}_i)_{i=1}^{11}\\|\bm{V}|=1}} \left|\int_{t_1}^{t_2}(\bm{N}(\bm{U}_\kappa)(t) \cdot \bm{V}) \dd t\right|\\
&\dfrac{1}{\kappa}\sup_{\substack{\bm{V}=(\vec{v}_i)_{i=1}^{11}\\|\bm{V}|=1}}\left|\sum_{i=1}^{11}\int_{t_1}^{t_2} \left(-\{(\overrightarrow{OP_i}+\vec{u}_{i,\kappa}(t))\cdot\vec{e}_3\}^{-}\right) (\vec{v}_i \cdot \vec{e}_3) \dd t\right|.
\end{align*}

By the definition of inner product in $\mathbb{R}^3$, the quantity $\vec{v}_i \cdot \vec{e}_3=|\vec{v}_i| \cos\widehat{\vec{v}_i \vec{e}_3}$ is maximized for vectors $\vec{v}_i$ which are parallel to $\vec{e}_3$. If, moreover, the vector $\vec{v}_i$ points in the direction opposite to $\vec{e}_3$ then all the factors appearing in the last integral are nonnegative. Having found one element with these features, the monotonicity of the integral gives:
\begin{align*}
&\sup_{\substack{\bm{V}=(\vec{v}_i)_{i=1}^{11}\\|\bm{V}|=1}}\left|\dfrac{1}{\kappa}\sum_{i=1}^{11}\int_{t_1}^{t_2} \left(-\{(\overrightarrow{OP_i}+\vec{u}_{i,\kappa}(t))\cdot\vec{e}_3\}^{-}\right) (\vec{v}_i \cdot \vec{e}_3) \dd t\right|\\
&=\dfrac{1}{\kappa}\sup_{\substack{\bm{V}=(\vec{v}_i)_{i=1}^{11}\\|\bm{V}|=1}}\left(\sum_{i=1}^{11}\int_{t_1}^{t_2} \left(-\{(\overrightarrow{OP_i}+\vec{u}_{i,\kappa}(t))\cdot\vec{e}_3\}^{-}\right) (\vec{v}_i \cdot \vec{e}_3) \dd t\right)\\
&=\dfrac{1}{\kappa}\int_{t_1}^{t_2} \sup_{\substack{\bm{V}=(\vec{v}_i)_{i=1}^{11}\\|\bm{V}|=1}}\left(\sum_{i=1}^{11}\left(-\{(\overrightarrow{OP_i}+\vec{u}_{i,\kappa}(t))\cdot\vec{e}_3\}^{-}\right) (\vec{v}_i \cdot \vec{e}_3)\right) \dd t\\
&=\dfrac{1}{\kappa}\int_{t_1}^{t_2} \sup_{\substack{\bm{V}=(\vec{v}_i)_{i=1}^{11}\\|\bm{V}|=1}}\left|\sum_{i=1}^{11}\left(-\{(\overrightarrow{OP_i}+\vec{u}_{i,\kappa}(t))\cdot\vec{e}_3\}^{-}\right) (\vec{v}_i \cdot \vec{e}_3)\right| \dd t\\
&=\dfrac{1}{\kappa}\int_{t_1}^{t_2} \sup_{\substack{\bm{V}=(\vec{v}_i)_{i=1}^{11}\\|\bm{V}|=1}}|\bm{N}(\bm{U}_\kappa) \cdot \bm{V}|\dd t=\dfrac{1}{\kappa}\int_{t_1}^{t_2} |\bm{N}(\bm{U}_\kappa)|\dd t.
\end{align*}

In conclusion, taking $t_1=0$ and $t_2=T$, we have that there exists a constant $c_0>0$ independent of $\kappa$ for which:
\begin{equation}
\label{bound2}
\dfrac{1}{\kappa}\|\bm{N}(\bm{U}_\kappa)\|_{L^1(0,T;\mathbb{R}^{33})}=\dfrac{1}{\kappa}\int_{0}^{T} |\bm{N}(\bm{U}_\kappa)|\dd t = \dfrac{1}{\kappa} \left|\int_{0}^{T}\bm{N}(\bm{U}_\kappa) \dd t\right| \le c_0, \quad\textup{ for all }\kappa >0.
\end{equation}

An application of~\eqref{bound2} to the equations of Problem~\ref{penalty} gives
\begin{equation}
\label{bound3}
\left\{\dfrac{\dd^2 \bm{U}_\kappa}{\dd t^2}\right\}_{\kappa>0} \textup{ is bounded in }L^1(0,T;\mathbb{R}^{33}).
\end{equation}

This completes the proof.

\end{proof}

In view of Lemma~\ref{lemma1} and Lemma~\ref{lemma2}, we let the parameter $\kappa$ approach zero in the penalized variational equations in Problem~\ref{penalty}. In what follows, weak convergences and weak-star convergences are respectively denoted by $\rightharpoonup$ and $\wsc$.
We observe that the first two boundedness properties in~\eqref{bound} imply that $\bm{U}_\kappa \in \mathcal{C}^0([0,T];\mathbb{R}^{33})$.

Since the acceleration $\frac{\dd^2 \bm{U}_\kappa}{\dd t^2}$ is uniformly bounded in $L^1(0,T;\mathbb{R}^{33})$, by the Dinculeanu-Zinger theorem we are able to extract a subsequence, still denoted $\{\bm{U}_\kappa\}_{\kappa>0}$, and to find a function $\bm{U} \in L^\infty(0,T;\mathbb{R}^{33})$ and a vector-valued measure $\bm{\mu}\in \mathcal{M}([0,T];\mathbb{R}^{33})$ such that:
\begin{equation}
\label{conv}
\begin{aligned}
\bm{U}_\kappa &\wsc \bm{U} \textup{ in } L^\infty(0,T;\mathbb{R}^{33}),\\
\dfrac{\dd \bm{U}_\kappa}{\dd t} &\wsc \dfrac{\dd \bm{U}}{\dd t} \textup{ in } L^\infty(0,T;\mathbb{R}^{33}),\\
\dfrac{\dd^2 \bm{U}_\kappa}{\dd t^2} &\wsc \bm{\mu} \textup{ in } \mathcal{M}([0,T];\mathbb{R}^{33}) \cong (\mathcal{C}^0([0,T];\mathbb{R}^{33}))^\ast.
\end{aligned}
\end{equation}

Besides, by the Aubin-Lions-Simon lemma (Theorem~\ref{Aubin}), we have that
\begin{equation}
\label{ALS}
\bm{U}_\kappa \to \bm{U},\quad\textup{ in }\mathcal{C}^0([0,T];\mathbb{R}^{33}).
\end{equation}

This leads us to define the linear and continuous operator $L_0:\mathcal{C}^0([0,T];\mathbb{R}^{33}) \to \mathbb{R}^{33}$ by
$$
L_0(\bm{V}):=\bm{V}(0),\quad\textup{ for all } \bm{V} \in \mathcal{C}^0([0,T];\mathbb{R}^{33}).
$$

Thanks to the convergence process~\eqref{conv}, we infer that
\begin{equation*}
\begin{aligned}
\bm{U}_\kappa &\rightharpoonup \bm{U} \textup{ in }W^{1,\infty}(0,T;\mathbb{R}^{33}) \hookrightarrow \mathcal{C}^0([0,T];\mathbb{R}^{33}),
\end{aligned}
\end{equation*}
and so we have that (recall that in finite-dimensional spaces weak convergence and strong convergence coincide; cf., e.g., \cite{Brez11}):
\begin{align*}
\bm{U}_\kappa(0) &\to \bm{U}(0).
\end{align*}

Since $\bm{U}_\kappa(0)=\bm{U}_0\in K$ for all $\kappa>0$, we infer that $\bm{U}(0)=\bm{U}_0$ as well. Moreover, by the third boundedness in the statement of Lemma~\ref{lemma2} and the continuity of the operator $\bm{N}$, we infer that
$$
\bm{N}(\bm{U})=\bm{0} \textup{ in }L^2(0,T;\mathbb{R}^{33}),
$$
so that $\bm{U} \in \mathcal{K}$ and $\bm{U}(0)=\bm{U}_0$. Therefore, there exists a number $t_0=t_0(\bm{F})>0$ independent of $\kappa$ such that:
\begin{equation}
\label{t0}
(\overrightarrow{OP_i}+\vec{u}_{i}(t) ) \cdot \vec{e}_3 >0,\quad\textup{ for all } t \in [0,t_0] \textup{ and for all }1 \le i \le 11.
\end{equation}

The convergence~\eqref{ALS} means that for all $\varepsilon>0$ there exists a number $\kappa_\varepsilon >0$ such that for each $0<\kappa<\kappa_\varepsilon$ it results:
\begin{equation*}
\sup_{t \in [0,T]} |\bm{U}(t)-\bm{U}_\kappa(t)| <\varepsilon.
\end{equation*}

Hence, if $0<\kappa<\kappa_\varepsilon$, in correspondence of the number $t_0=t_0(\bm{F})>0$ defined beforehand we have that for all $t \in [0,t_0]$ and all $1 \le i \le 11$, it results:
\begin{equation}
\label{continuity}
\begin{aligned}
&(\overrightarrow{OP_i}+\vec{u}_{i,\kappa}(t) ) \cdot \vec{e}_3\\
&=(\overrightarrow{OP_i}+\vec{u}_{i,\kappa}(t)-\vec{u}_i(t)) \cdot \vec{e}_3 +\vec{u}_i(t) \cdot \vec{e}_3\\
&=(\overrightarrow{OP_i}+\vec{u}_i(t)) \cdot \vec{e}_3+(\vec{u}_{i,\kappa}(t)-\vec{u}_i(t))\cdot \vec{e}_3\\
&\ge (\overrightarrow{OP_i}+\vec{u}_i(t)) \cdot \vec{e}_3 - \sup_{t \in [0,T]}|\vec{u}_{i,\kappa}(t)-\vec{u}_i(t)|\\
&>(\overrightarrow{OP_i}+\vec{u}_i(t)) \cdot \vec{e}_3 -\varepsilon.
\end{aligned}
\end{equation}

In view of~\eqref{t0}, we choose 
$$
0<\varepsilon<\frac{1}{2}\inf_{\substack{t\in[0,t_0]\\1\le i \le 11}}(\overrightarrow{OP_i}+\vec{u}_i(t)) \cdot \vec{e}_3,
$$
so that the right-hand side in~\eqref{continuity} is strictly positive. We thus conclude that:
\begin{equation}
\label{stability}
(\overrightarrow{OP_i}+\vec{u}_{i,\kappa}(t) ) \cdot \vec{e}_3 >0,\quad\textup{ for all } t \in [0,t_0] \textup{ and for all }1 \le i \le 11,
\end{equation}
for all $0<\kappa<\kappa_\varepsilon$.

For what concerns the first derivative of $\bm{U}_\kappa$, the condition~\eqref{stability} implies that in the time interval $[0,t_0]$ the constraint is inactive and so the penalty term in Problem~\ref{penalty} vanishes in the time interval $[0,t_0]$. As a result, we have that
$$
\dfrac{\dd^2 \bm{U}_\kappa}{\dd t^2} \textup{ is bounded in } L^\infty(0,t_0;\mathbb{R}^{33}),
$$
which means, by the Sobolev embedding theorem (cf., e.g., Theorem~10.1.20 of~\cite{YP}), that up to passing to subsequences:
\begin{equation}
\label{local}
\dfrac{\dd \bm{U}_\kappa}{\dd t} \rightharpoonup \dfrac{\dd \bm{U}}{\dd t} \textup{ in }\mathcal{C}^0([0,t_0];\mathbb{R}^{33}).
\end{equation}

This leads us to define the linear and continuous operator $L_1:\mathcal{C}^0([0,t_0];\mathbb{R}^{33}) \to \mathbb{R}^{33}$ by
$$
L_1(\bm{V}):=\bm{V}(0),\quad\textup{ for all } \bm{V} \in \mathcal{C}^0([0,t_0];\mathbb{R}^{33}).
$$

By virtue of~\eqref{local}, we are in a position to infer that:
\begin{equation*}
\dfrac{\dd \bm{U}}{\dd t}(0)=\bm{U}_1.
\end{equation*}

This shows that the limit vector field $\bm{U} \in L^\infty(0,T;\mathbb{R}^{33})$ satisfies $\frac{\dd\bm{U}}{\dd t} \in L^\infty(0,T;\mathbb{R}^{33})$ as well as the initial conditions.

Let us now multiply, in the sense of the duality $\langle\langle \cdot, \cdot \rangle\rangle_{\mathcal{M}([0,T];\mathbb{R}^{33}),\mathcal{C}^0([0,T];\mathbb{R}^{33})}$ of $\mathcal{C}^0([0,T];\mathbb{R}^{33})$ and its dual (which is identified, by Theorem~\ref{ds}, with the space $\mathcal{M}([0,T];\mathbb{R}^{33})$), the equations in Problem~\ref{penalty} by $(\bm{V}-\bm{U}_\kappa)$, where $\bm{V} \in \mathcal{K}$. We obtain that:
\begin{equation}
\label{step1}
\begin{aligned}
&\int_{0}^{T} \dfrac{\dd^2 \bm{U}_\kappa}{\dd t^2} \cdot (\bm{V}-\bm{U}_\kappa) \dd t\\
&+\int_{0}^{T}(\bm{V}-\bm{U}_\kappa)^T \Upsilon \bm{U}_\kappa\dd t\\
&+\dfrac{1}{\kappa}\int_{0}^{T} \bm{N}(\bm{U}_\kappa) \cdot (\bm{V}-\bm{U}_\kappa) \dd t\\
&=\int_{0}^{T} \bm{F} \cdot (\bm{V}-\bm{U}_\kappa) \dd t,\quad\textup{ for all }\bm{V} \in \mathcal{K}.
\end{aligned}
\end{equation}

Observe that the monotonicity of $\bm{N}$ established in Corollary~\ref{N} gives:
$$
\dfrac{1}{\kappa}\int_{0}^{T} \bm{N}(\bm{U}_\kappa) \cdot (\bm{V}-\bm{U}_\kappa) \dd t=-\dfrac{1}{\kappa} \int_{0}^{T} (\bm{N}(\bm{V})-\bm{N}(\bm{U}_\kappa) )\cdot (\bm{V}-\bm{U}_\kappa) \dd t \le 0.
$$

Therefore, the equations~\eqref{step1} become:
\begin{equation}
	\label{step2}
	\begin{aligned}
		&\int_{0}^{T} \dfrac{\dd^2 \bm{U}_\kappa}{\dd t^2} \cdot (\bm{V}-\bm{U}_\kappa) \dd t\\
		&+\int_{0}^{T}(\bm{V}-\bm{U}_\kappa)^T \Upsilon \bm{U}_\kappa\dd t\\
		&\ge\int_{0}^{T} \bm{F} \cdot (\bm{V}-\bm{U}_\kappa) \dd t,\quad\textup{ for all }\bm{V} \in \mathcal{K}.
	\end{aligned}
\end{equation}

Exploiting the convergences in~\eqref{conv} and~\eqref{ALS} allows us to change~\eqref{step2} into:
\begin{equation}
	\label{step3}
	\begin{aligned}
		&\langle\langle\bm{\mu},\bm{V}-\bm{U}\rangle\rangle_{\mathcal{M}([0,T];\mathbb{R}^{33}),\mathcal{C}^0([0,T];\mathbb{R}^{33})}\\
		&+\int_{0}^{T}(\bm{V}-\bm{U})^T \Upsilon \bm{U}\dd t\\
		&\ge\int_{0}^{T} \bm{F} \cdot (\bm{V}-\bm{U}) \dd t,\quad\textup{ for all }\bm{V} \in \mathcal{K}.
	\end{aligned}
\end{equation}

We can observe that, by the convergence process~\eqref{conv}, the vector-valued measure $\bm{\mu} \in \mathcal M([0,T];\mathbb{R}^{33})$ can be interpreted as the acceleration of the limit displacement $\bm{U}$. Indeed, by the classical definition of weak derivative, we have that
$$
\int_{0}^{T} \dfrac{\dd \bm{U}_\kappa}{\dd t}  \varphi' \dd t = -\int_{0}^{T} \dfrac{\dd^2 \bm{U}_\kappa}{\dd t^2} \varphi \dd t, \quad\mbox{ for all }\varphi \in \mathcal{D}(0,T).
$$

By the properties of Lebesgue-Bochner integrals we have that, for all $\bm{V}\in \mathbb{R}^{33}$ and all $\varphi \in \mathcal{D}(0,T)$, it results
$$
\int_{0}^{T}\dfrac{\dd \bm{U}_{\kappa}}{\dd t} \cdot (\varphi' \bm{V}) \dd t=-\int_{0}^T \dfrac{\dd^2 \bm{U}_\kappa}{\dd t^2} \cdot (\varphi \bm{V}) \dd t,
$$
so that, letting $\kappa \to 0$ (see Comment~3 of Chapter~4 of~\cite{Brez11}) gives
\begin{align*}
	&\left(\int_{0}^T \dfrac{\dd \bm{U}_\kappa}{\dd t} \varphi' \dd t\right) \cdot \bm{V}
	= \int_{0}^T\left(\dfrac{\dd \bm{U}_\kappa}{\dd t} \cdot \bm{V}\right) \varphi' \dd t\\
	=&-\langle\langle\bm{\mu}, \varphi \bm{V} \rangle\rangle_{\mathcal{M}([0,T];\mathbb{R}^{33}),\mathcal{C}^0([0,T];\mathbb{R}^{33})}
	=-\int_{0}^T\dd \bm{\mu} \cdot (\varphi \bm{V}) \dd t,	
\end{align*}
where the first equality holds by Fubini's theorem, the second equality holds by Theorem~\ref{ds}, the third convergence of the process~\eqref{conv} and the definition of weak derivative, and, finally, the last equality holds true by Theorem~\ref{ds}. This means that it is licit to replace the measure $\bm{\mu}$ in the third convergence of~\eqref{conv} by the more intuitive symbol $\frac{\dd^2 \bm{U}}{\dd t^2}$.

In conclusion, we have shown that there exists at least one element $\bm{U} \in \mathcal{K}$ satisfying the following initial value \emph{limit} problem.

\begin{customprob}{$\mathcal{P}$}
	\label{limit}
	Given $\bm{F}=(\vec{f}_i)_{i=1}^{11} \in L^2(0,T;\mathbb{R}^{33})$, find $\bm{U}=(\vec{u}_{i})_{i=1}^{11}:[0,T] \to \mathbb{R}^{33}$ such that
	\begin{align*}
		\bm{U} &\in \mathcal{K},\\
		\dfrac{\dd \bm{U}}{\dd t} &\in L^\infty(0,T;\mathbb{R}^{33}),\\
		\dfrac{\dd^2 \bm{U}}{\dd t^2} &\in \mathcal{M}([0,T];\mathbb{R}^{33}),
	\end{align*}
	that satisfies the following variational inequalities
	\begin{equation*}
	\begin{aligned}
	&\left\langle\left\langle\dfrac{\dd^2 \bm{U}}{\dd t^2},\bm{V}-\bm{U}\right\rangle\right\rangle_{\mathcal{M}([0,T];\mathbb{R}^{33}),\mathcal{C}^0([0,T];\mathbb{R}^{33})}\\
	&\quad+\int_{0}^{T} (\bm{V}-\bm{U})^T \Upsilon \bm{U} \dd t \ge \int_{0}^{T}\bm{F} \cdot (\bm{V}-\bm{U}) \dd t,
	\end{aligned}
	\end{equation*}
	for all $\bm{V}=(\vec{v}_i)_{i=1}^{11} \in \mathcal{K}$, and satisfying the following initial conditions
	\begin{align*}
		\bm{U}(0)&=\bm{U}_0,\\
		\dfrac{\dd \bm{U}}{\dd t}(0)&=\bm{U}_1,
	\end{align*}
	for some prescribed elements $\bm{U}_0 \in K$ and $\bm{U}_1 \in \mathbb{R}^{33}$ as in the statement of Problem~\ref{penalty}.
\bqed
\end{customprob}

We summarize the results that we proved in the following theorem, which constitutes the main result of this section.

\begin{theorem}
\label{th1}
Problem~\ref{limit} admits at least one solution.
\qed
\end{theorem}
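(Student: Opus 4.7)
The plan is to realize Theorem \ref{th1} as the synthesis of the penalty-based analysis carried out above: start from the unique classical solutions $\bm{U}_\kappa$ of Problem \ref{penalty} guaranteed by Lemma \ref{lemma1}, exploit the uniform estimates of Lemma \ref{lemma2} to extract subsequences converging in suitable topologies, and identify the limit as a solution of Problem \ref{limit}. The penalty parameter $\kappa$ is the compactness knob: sending $\kappa \to 0$ forces $\bm{N}(\bm{U})=\bm{0}$ and hence $\bm{U}(t)\in K$.

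First I would combine the first two bounds of Lemma \ref{lemma2} with Banach--Alaoglu to obtain the weak-$\ast$ limits $\bm{U}_\kappa \wsc \bm{U}$ and $\dfrac{\dd\bm{U}_\kappa}{\dd t}\wsc \dfrac{\dd\bm{U}}{\dd t}$ in $L^\infty(0,T;\mathbb{R}^{33})$, while the $L^1$-bound on the second derivative together with the Dinculeanu--Zinger duality (Theorem \ref{ds}) produces a vector-valued measure $\bm{\mu}\in\mathcal{M}([0,T];\mathbb{R}^{33})$ with $\dfrac{\dd^2\bm{U}_\kappa}{\dd t^2}\wsc\bm{\mu}$. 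The Aubin--Lions--Simon result (Theorem \ref{Aubin}) applied in finite dimensions then upgrades the first convergence to the strong convergence $\bm{U}_\kappa\to\bm{U}$ in $\mathcal{C}^0([0,T];\mathbb{R}^{33})$.

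Next I would secure admissibility and the initial data of the limit. The third estimate of Lemma \ref{lemma2} gives $\|\bm{N}(\bm{U}_\kappa)\|_{L^1}\le c_0\kappa\to 0$; continuity of $\bm{N}$ (Corollary \ref{N}) together with the uniform convergence forces $\bm{N}(\bm{U})=\bm{0}$, i.e.\ $\bm{U}\in\mathcal{K}$. Evaluation at $t=0$ in $\mathcal{C}^0([0,T];\mathbb{R}^{33})$ immediately yields $\bm{U}(0)=\bm{U}_0$. To recover $\dfrac{\dd\bm{U}}{\dd t}(0)=\bm{U}_1$ I would reuse the strict interior argument preceding \eqref{stability}: by continuity of $\bm{U}$ and strict feasibility of $\bm{U}_0$, there exists $t_0>0$ on which the constraint is inactive for $\bm{U}$, and by \eqref{stability} also for $\bm{U}_\kappa$ when $\kappa$ is small; the penalty term then vanishes on $[0,t_0]$, so $\dfrac{\dd^2\bm{U}_\kappa}{\dd t^2}$ is uniformly bounded in $L^\infty(0,t_0;\mathbb{R}^{33})$, and a second compactness extraction yields $\dfrac{\dd\bm{U}_\kappa}{\dd t}\to\dfrac{\dd\bm{U}}{\dd t}$ in $\mathcal{C}^0([0,t_0];\mathbb{R}^{33})$, allowing trace evaluation at zero.

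Finally I would test the penalized equation against $\bm{V}-\bm{U}_\kappa$ with arbitrary $\bm{V}\in\mathcal{K}$ and use the monotonicity of $\bm{N}$ (Corollary \ref{N}) together with $\bm{N}(\bm{V})=\bm{0}$ to eliminate the sign-definite penalty contribution, turning the equality into the inequality \eqref{step2}. Passing to the limit term by term along the subsequence, with the pairing of $\bm{\mu}$ against $\bm{V}-\bm{U}$ interpreted in the duality of $\mathcal{C}^0([0,T];\mathbb{R}^{33})$ and $\mathcal{M}([0,T];\mathbb{R}^{33})$ via Theorem \ref{ds}, produces exactly the variational inequality of Problem \ref{limit}. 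I expect the chief subtlety to be the identification of the limit measure $\bm{\mu}$ with the distributional second derivative of $\bm{U}$: one must test with separable products $\varphi\bm{V}$, $\varphi\in\mathcal{D}(0,T)$, $\bm{V}\in\mathbb{R}^{33}$, integrate by parts in time, and exchange the limit with the Bochner integral---a step made delicate by the fact that $\dfrac{\dd^2\bm{U}}{\dd t^2}$ only lives in the space of vector-valued measures, so classical $L^p$-manipulations are unavailable and the Dinculeanu--Zinger representation must be invoked carefully.
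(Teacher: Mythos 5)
Your proposal is correct and follows essentially the same route as the paper: the penalty method with the solutions $\bm{U}_\kappa$ from Lemma~\ref{lemma1}, the uniform bounds of Lemma~\ref{lemma2}, weak-$\ast$ and Dinculeanu--Zinger compactness plus Aubin--Lions--Simon, the monotonicity of $\bm{N}$ to discard the penalty term after testing with $\bm{V}-\bm{U}_\kappa$, the strict-feasibility interval $[0,t_0]$ to recover the initial velocity, and the identification of the limit measure with $\frac{\dd^2\bm{U}}{\dd t^2}$ by testing with $\varphi\bm{V}$, $\varphi\in\mathcal{D}(0,T)$. No substantive differences from the paper's argument.
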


\section*{Acknowledgments}

This work was partly supported by the Research Fund of Indiana University.

B.D. acknowledges support from the Army Research Office, under award W911NF2010072, and from the National Science Foundation, under award CBET 1740432.

\bibliographystyle{abbrvnat} 
\bibliography{references.bib}	

\end{document}